\title[A free boundary inviscid model of flow-structure interaction]{A free boundary inviscid model of flow-structure interaction}
\author[M.S.~Ayd{\i}n]{Mustafa Sencer Ayd{\i}n}
\address{Department of Mathematics, University of Southern California, Los Angeles, CA 90089}
\email{maydin@usc.edu}
\author[I.~Kukavica]{Igor Kukavica}
\address{Department of Mathematics, University of Southern California, Los Angeles, CA 90089}
\email{kukavica@usc.edu}
\author[A.~Tuffaha]{Amjad Tuffaha}
\address{Department of Mathematics and Statistics, American University of Sharjah, Sharjah, UAE}
\email{atufaha\char'100aus.edu}
\chardef\forshowkeys=0
\chardef\refcheck=0
\chardef\showllabel=0
\chardef\sketches=0
\chardef\showcolors=0
\chardef\showfont=0        %set 0 for Igor, set 1 for Mustafa
\begin{document}

	%varmac
	\def\rr{r}
	\def\bnew{\colr NEW: }
	\def\enew{\colb {}}
	\def\bold{\colu OLD: }
	\def\eold{\colb {}}
	\def\XX{X}
	\def\YY{Y}
	\def\ZZZ{Z}
	\def\Tmax{T_\text{max}}
	\def\Cabs{C_\text{abs}}
	\def\fomega{f_{\Omega}}
	\def\fgamma{f_{\Gamma_1}}

	\def\intint{\int\!\!\!\!\int}
	\def\OO{\mathcal O}
	\def\SS{\mathbb S}
	\def\CC{\mathbb C}
	\def\RR{\mathbb R}
	\def\TT{\mathbb T}
	\def\ZZ{\mathbb Z}
	\def\HH{\mathbb H}
	\def\RSZ{\mathcal R}
	\def\LL{\mathcal L}
	\def\SL{\LL^1}
	\def\ZL{\LL^\infty}
	\def\GG{\mathcal G}
	\def\tt{\langle t\rangle}
	\def\erf{\mathrm{Erf}}
	\def\mgt#1{\textcolor{magenta}{#1}}
	\def\ff{\rho}
	\def\gg{G}
	\def\sqrtnu{\sqrt{\nu}}
	\def\ww{w}
	\def\ft#1{#1_\xi}
	\def\lec{\lesssim}
	\def\ges{\gtrsim}
	\renewcommand*{\Re}{\ensuremath{\mathrm{{\mathbb R}e\,}}}
	\renewcommand*{\Im}{\ensuremath{\mathrm{{\mathbb I}m\,}}}
	
	\ifnum\showllabel=1
	\def\llabel#1{\marginnote{\color{lightgray}\rm\small(#1)}[-0.0cm]\notag}
	\else
	\def\llabel#1{\notag}
	\fi
	
	\newcommand{\norm}[1]{\left\|#1\right\|}
	\newcommand{\nnorm}[1]{\lVert #1\rVert}
	\newcommand{\abs}[1]{\left|#1\right|}
	\newcommand{\NORM}[1]{|\!|\!| #1|\!|\!|}

\newtheorem{theorem}{Theorem}[section]
\newtheorem{Theorem}{Theorem}[section]
\newtheorem{corollary}[theorem]{Corollary}
\newtheorem{Corollary}[theorem]{Corollary}
\newtheorem{proposition}[Theorem]{Proposition}
\newtheorem{Proposition}[Theorem]{Proposition}
\newtheorem{Lemma}[Theorem]{Lemma}
\newtheorem{lemma}[Theorem]{Lemma}

\theoremstyle{definition}
\newtheorem{definition}{Definition}[section]
\newtheorem{Remark}[theorem]{Remark}

\def\theequation{\thesection.\arabic{equation}}
	\numberwithin{equation}{section}

	\definecolor{mygray}{rgb}{.6,.6,.6}
	\definecolor{myblue}{rgb}{9, 0, 1}
	\definecolor{colorforkeys}{rgb}{1.0,0.0,0.0}

	\newlength\mytemplen
	\newsavebox\mytempbox
	\def\weaks{\text{\,\,\,\,\,\,weakly-* in }}
	\def\weak{\text{\,\,\,\,\,\,weakly in }}
	\def\inn{\text{\,\,\,\,\,\,in }}
	\def\cof{\mathop{\rm cof\,}\nolimits}
	\def\Dn{\frac{\partial}{\partial N}}
	\def\Dnn#1{\frac{\partial #1}{\partial N}}
	\def\tdb{\tilde{b}}
	\def\tda{b}
	\def\qqq{u}
	\def\lat{\Delta_2}
	\def\biglinem{\vskip0.5truecm\par==========================\par\vskip0.5truecm}
	
	\def\inon#1{\hbox{\ \ \ }\hbox{#1}}               
	\def\onon#1{\inon{on~$#1$}}
	\def\inin#1{\inon{in~$#1$}}
	
	\def\FF{F}
	\def\andand{\text{\indeq and\indeq}}
	\def\ww{w(y)}
	\def\ll{{\color{red}\ell}}
	\def\ee{\epsilon_0}
	\def\startnewsection#1#2{ \section{#1}\label{#2}\setcounter{equation}{0}}   
	\def\nnewpage{ }
	\def\sgn{\mathop{\rm sgn\,}\nolimits}    
	\def\Tr{\mathop{\rm Tr}\nolimits}    
	\def\div{\mathop{\rm div}\nolimits}
	\def\curl{\mathop{\rm curl}\nolimits}
	\def\dist{\mathop{\rm dist}\nolimits}  
	\def\supp{\mathop{\rm supp}\nolimits}
	\def\indeq{\quad{}}           
	\def\period{.}                       
	\def\semicolon{\,;}                  
	
	\ifnum\showcolors=1
	\def\colr{\color{red}}
	\def\colrr{\color{black}}
	\def\colb{\color{black}}
	\def\coly{\color{lightgray}}
	\definecolor{colorgggg}{rgb}{0.1,0.5,0.3}
	\definecolor{colorllll}{rgb}{0.0,0.7,0.0}
	\definecolor{colorhhhh}{rgb}{0.3,0.75,0.4}
	\definecolor{colorpppp}{rgb}{0.7,0.0,0.2}
	\definecolor{coloroooo}{rgb}{0.45,0.0,0.0}
	\definecolor{colorqqqq}{rgb}{0.1,0.7,0}
	\def\colg{\color{colorgggg}}
	\def\collg{\color{colorllll}}
	\def\cole{\color{coloroooo}}
	\def\coleo{\color{colorpppp}}
	\def\colu{\color{blue}}
	\def\colc{\color{colorhhhh}}
	\def\colW{\colb}   
	\definecolor{coloraaaa}{rgb}{0.6,0.6,0.6}
	\def\colw{\color{coloraaaa}}
	\else
	\def\colr{\color{black}}
	\def\colrr{\color{black}}
	\def\colb{\color{black}}
	\def\coly{\color{black}}
	\def\colg{\color{black}}
	\def\collg{\color{black}}
	\def\cole{\color{black}}
	\def\coleo{\color{black}}
	\def\colu{\color{black}}
	\def\colc{\color{black}}
	\def\colW{\color{black}}
	\def\colw{\color{black}}
	\fi

	\def\comma{ {\rm ,\qquad{}} }            
	\def\commaone{ {\rm ,\quad{}} }          
	\def\nts#1{{\color{blue}\hbox{\bf ~#1~}}} 
	\def\ntsf#1{\footnote{\color{colorgggg}\hbox{#1}}} 
	\def\blackdot{{\color{red}{\hskip-.0truecm\rule[-1mm]{4mm}{4mm}\hskip.2truecm}}\hskip-.3truecm}
	\def\bluedot{{\color{blue}{\hskip-.0truecm\rule[-1mm]{4mm}{4mm}\hskip.2truecm}}\hskip-.3truecm}
	\def\purpledot{{\color{colorpppp}{\hskip-.0truecm\rule[-1mm]{4mm}{4mm}\hskip.2truecm}}\hskip-.3truecm}
	\def\greendot{{\color{colorgggg}{\hskip-.0truecm\rule[-1mm]{4mm}{4mm}\hskip.2truecm}}\hskip-.3truecm}
	\def\cyandot{{\color{cyan}{\hskip-.0truecm\rule[-1mm]{4mm}{4mm}\hskip.2truecm}}\hskip-.3truecm}
	\def\reddot{{\color{red}{\hskip-.0truecm\rule[-1mm]{4mm}{4mm}\hskip.2truecm}}\hskip-.3truecm}
	
	\def\tdot{{\color{green}{\hskip-.0truecm\rule[-.5mm]{3mm}{3mm}\hskip.2truecm}}\hskip-.1truecm}
	\def\gdot{\greendot}
	\def\bdot{\bluedot}
	\def\ydot{\cyandot}
	\def\rdot{\cyandot}
	
	\def\fractext#1#2{{#1}/{#2}}
	\def\ii{\hat\imath}
	\def\fei#1{\textcolor{blue}{#1}}
	\def\vlad#1{\textcolor{cyan}{#1}}
	\def\igor#1{\text{{\textcolor{colorqqqq}{#1}}}}
	\def\igorf#1{\footnote{\text{{\textcolor{colorqqqq}{#1}}}}}

	\def\AA{Y}
	\newcommand{\p}{\partial}
	\newcommand{\UE}{U^{\rm E}}
	\newcommand{\PE}{P^{\rm E}}
	\newcommand{\KP}{K_{\rm P}}
	\newcommand{\uNS}{u^{\rm NS}}
	\newcommand{\vNS}{v^{\rm NS}}
	\newcommand{\pNS}{p^{\rm NS}}
	\newcommand{\omegaNS}{\omega^{\rm NS}}
	\newcommand{\uE}{u^{\rm E}}
	\newcommand{\vE}{v^{\rm E}}
	\newcommand{\pE}{p^{\rm E}}
	\newcommand{\omegaE}{\omega^{\rm E}}
	\newcommand{\ua}{u_{\rm   a}}
	\newcommand{\va}{v_{\rm   a}}
	\newcommand{\omegaa}{\omega_{\rm   a}}
	\newcommand{\ue}{u_{\rm   e}}
	\newcommand{\ve}{v_{\rm   e}}
	\newcommand{\omegae}{\omega_{\rm e}}
	\newcommand{\omegaeic}{\omega_{{\rm e}0}}
	\newcommand{\ueic}{u_{{\rm   e}0}}
	\newcommand{\veic}{v_{{\rm   e}0}}
	\newcommand{\up}{u^{\rm P}}
	\newcommand{\vp}{v^{\rm P}}
	\newcommand{\tup}{{\tilde u}^{\rm P}}
	\newcommand{\bvp}{{\bar v}^{\rm P}}
	\newcommand{\omegap}{\omega^{\rm P}}
	\newcommand{\tomegap}{\tilde \omega^{\rm P}}
	\renewcommand{\up}{u^{\rm P}}
	\renewcommand{\vp}{v^{\rm P}}
	\renewcommand{\omegap}{\Omega^{\rm P}}
	\renewcommand{\tomegap}{\omega^{\rm P}}
	\def\hh{\text{h}}

\begin{abstract}
We address the existence and of solutions for the Euler-plate
free-boundary system modeling an interaction of a three-dimensional
inviscid fluid and an evolving plate. We prove the local existence and
uniqueness of solutions for initial fluid and structural velocities
belonging to $H^{2.5+}$ and $H^{2+}$, respectively. The results justify
earlier a~priori estimates shown by two of the authors.
\end{abstract}
	
	\keywords{Euler equations, free-boundary problems, Euler-plate system}
	\maketitle
	
	\setcounter{tocdepth}{2} 
	\tableofcontents

\section{Introduction}\label{sec_int}
We consider the local-in-time well-posedness of the three-dimensional incompressible Euler-plate 
system where the evolution of the velocity is given by the three-dimensional incompressible Euler equations
\begin{align}
	\begin{split}
		& u_t 
		+ (u\cdot \nabla) u
		+ \nabla p = 0
		\\&
		\nabla \cdot u=0,
	\end{split}
	\label{EQ01}
\end{align}
in $\Omega(t) \times[0,T]$, and
the displacement of the plate is modeled by a fourth order Euler-Bernoulli equation
\begin{equation}
	w_{tt} + \Delta_2^2 w
	- \nu \Delta_{2} w_{t}
	= p
	\onon{\Gamma_1\times[0,T]}
	,
	\label{EQ02}
\end{equation}
where $p$ is the fluid pressure \colr evaluated at the interface.\colb

The model under investigation describes the interaction of a large elastic deformable structure with an inviscid incompressible fluid. We consider the case where the fluid occupies a channel with a rigid bottom and the elastic structure is at the top, and impose periodic boundary conditions in the horizontal directions. The viscous counterpart, involving the Navier-Stokes equations instead of the Euler equations, has been well-studied in the mathematical literature, beginning with \cite{DEGLT}. It was inspired by computational models of fluid-structure interaction used in the engineering and scientific communities to model a wide variety of physical systems from aeroelasticity \cite{FLT} to arterial blood flow \cite{QTV, GGCC, GGCCL}. The literature on the well-posedness of the Navier-Stokes based model is extensive and includes results on both weak solutions in both 2d and 3d \cite{CDEG} and strong solutions \cite{B,G,GH, CGH, GHL, MRR, C}. Variants of the model that include nonlinear structure equations of elasticity and Koiter shell models have also been considered in \cite{CCS, CS2, L, LR, MC1, MC2, MC3, MS, BS2, BKS} while the compressible model has been treated in \cite{BS1,BT}. Results on the model which considers the wave equation in place of the Euler-Bernoulli equations are also available in the literature \cite{L1, L2}. Linearized models and stationary models were also studied extensively as reasonable approximations,  see e.g. \cite{AB, Ch, AGW, CLW, CR, LW, W, CK}.

Unlike the viscous case,  the study of free boundary inviscid fluid-elastic structure interaction models have gained attention only recently in the mathematical community. Special interest has been given to the mathematical study of hydroelastic waves whereby potential flow (modeling  inviscid and irrotational fluid) interacts with an elastic structure modeled using the special Cosserat theory of hyperelastic shells satisfying Kirchhoff’s hypothesis (usually referred to as the Toland model) \cite{T, BT,PT1,PT2, PT3}. The Toland model involves a semilinear elliptic equation in the curvature variable of the interface and does not account for inertial effects.  Strong solutions to this model in high regularity Sobolev spaces have been established in~\cite{LA} using a vortex sheet formulation of the problem via the Birkhoff-Rott integral at the moving interface. 
Local well-posedness of the full rotational version of this model employing the Euler equations have been established in \cite{WY} recently in high regularity Sobolev spaces. The main obstacle in the mathematical study of the Euler equations based model (which is not the case for Navier-Stokes) has been the dearth of mathematical theory on variable coefficients Euler equations with non-homogeneous boundary conditions. 

For the model under consideration comprising the Euler equations to model the fluid motion and the Euler-Bernoulli beam equation describing the deformation of the elastic interface, strong solutions were constructed in a recent work~\cite{KT}, where the authors of the present paper developed a framework for studying the Euler equations with variable coefficients and non-homogeneous boundary conditions and proceeded to established the existence and uniqueness of solutions to the above Euler-plate system
assuming that the initial fluid velocity belongs to $H^3$. For the problem of weak solutions, we refer the reader to a recent work \cite{AKT} where global-in-time weak solution were established for irrotational flow in 2 space dimensions.
It is our goal in this paper to establish the local-in-time
existence and uniqueness of strong solutions under minimal Sobolev regularity of the initial
data. 
To accomplish this, we transform the Euler equations to a variable coefficients system in a fixed domain using the Arbitrary Lagrangian Eulerian (ALE) transformation which transforms the variables $(u,p)$ to new variables $(v,q)$ defined on the fixed reference initial domain $\Omega$.  The main result of the paper is Theorem 1.3, which asserts the existence of local-in-time solutions
$(v,q,w,w_t) \in L^{\infty}(H^{s+1}(\Omega) \times H^{s}(\Omega) \times H^{s+5/2}( \Gamma_{1}) \times H^{s+1/2}(\Gamma_{1})) $ for $ s>3/2$.

The proof relies on approximation of the initial data by  a smoother sequence and deriving energy estimates for the corresponding regularized sequence of solutions which are uniform in time. The main difficulty is to insure that the time of local-existence does not shrink to zero when passing to the limit. The energy estimates combine tangential and vorticity estimates with special elliptic boundary value problem bounds for the pressure. The arguments rely on an extension/continuation argument to establish a uniform time of local existence for the approximating sequence of solutions. The proof then utilizes the a~priori estimates at the critical level from \cite{KT} which  hold on a uniform time interval in addition to compactness arguments to extract convergent subsequence converging to the solution to this system with the desired regularity. Finally, the uniqueness of solutions is established by considering energy estimates for the difference of two solutions in a lower topology. This is necessitated by the fact that estimates on the difference of the vorticity variables corresponding to two different solutions satisfy an equation which is no longer of purely transport type whereby higher order terms appear and for which estimates can only be closed by considering one derivative below the maximum level. This in turn requires estimates for the pressure as a solution to an elliptic boundary value problem with Robin type data at a low regularity level. These elliptic estimates are adapted from classical works \cite{LM} to the situation at hand with variable coefficients.

The paper is organized as follows. In the next section, we perform the ALE change of variables, while Section~\ref{sec12} contains the statement of the main theorems. The detailed proofs of the two main theorems concerning the existence and uniqueness are provided in
Sections~\ref{sec_ex} and~\ref{sec.un}, respectively.

\section{The Euler-plate system and the main results}
\label{sec2}
\subsection{The ALE coordinates}

For any $t>0$, consider the solution $\psi$
to the elliptic problem
\begin{align}
	\begin{split}
		&\Delta \psi = 0
		\inon{on $\Omega$}
		\\&
		\psi(x_1,x_2,1,t)=1+w(x_1,x_2,t)
		\inon{on $\Gamma_1$}
		\\&
		\psi(x_1,x_2,0,t)=0
		\inon{on $\Gamma_0$}
		,
	\end{split}
	\label{EQ03}
\end{align}
which is the harmonic extension of
$1+w$ to the domain $\Omega=\Omega(0)$ for $t\in[0,T]$.
We use $\psi$ to define the ALE change of coordinates map 
$\eta\colon \Omega\times[0,T]\to \Omega(t) $ as
\begin{equation}
	\eta(x_1,x_2, x_3,t)=(x_1,x_2,\psi(x_1,x_2,x_3,t))
	\comma (x_1,x_2,x_3)\in \Omega
	,
	\llabel{EQ04}
\end{equation}
with the gradient
\begin{equation}
	\nabla \eta
	=
	\begin{pmatrix}
		1 &  0 & 0 \\
		0 &  1 & 0 \\
		\partial_{1}\psi & \partial_{2} \psi & \partial_{3}\psi
	\end{pmatrix}
	.
	\llabel{EQ05}
\end{equation}
Denote by $a$ the inverse of this derivative, i.e.,
\begin{equation}
	a = \frac{1}{J} b
	=
	\begin{pmatrix}
		1 &  0 & 0 \\
		0 & 1 & 0 \\
		-\partial_{1}\psi/\partial_{3}\psi & -\partial_{2}\psi/\partial_{3}\psi & 1/\partial_{3}\psi
	\end{pmatrix}
	,
	\label{EQ06}
\end{equation}
where $J$ is the Jacobian and $b$ is the cofactor matrix, i.e.,
\begin{equation}
	J=\partial_{3}\psi,
	\label{EQ07}
\end{equation}
and 
\begin{equation}
	b
	=
	\begin{pmatrix}
		\partial_{3}\psi &  0 & 0 \\
		0 & \partial_{3} \psi & 0 \\
		-\partial_{1}\psi & -\partial_{2}\psi & 1
	\end{pmatrix}
	.
	\label{EQ08}
\end{equation}
We note in passing that 
\begin{align}
	\Vert \eta\Vert_{H^{s}}
	 \lec 
	  \Vert \psi\Vert_{H^{s}}
	   \lec
	    \Vert w\Vert_{H^{s-0.5}(\Gamma_{1})}
   \andand
	\Vert J\Vert_{H^{s}}
	\lec
	\Vert \psi\Vert_{H^{s+1}}
	\lec
	\Vert w\Vert_{H^{s+0.5}(\Gamma_{1})}
	    ,\llabel{EQ09}
\end{align}
and that
\begin{align}
	\Vert b\Vert_{H^{s}}
	 \lec
	  \Vert \psi\Vert_{H^{s+1}}
	   \lec
	    \Vert w\Vert_{H^{s+0.5}(\Gamma_{1})}
	    ,\llabel{EQ10}
\end{align}
for sufficiently regular solutions and $s\ge 0$.
Assuming that $J$ remains bounded below 
by a positive constant, we also have
\begin{align}
	\Vert a\Vert_{H^{s}}
	\lec
	\Vert w\Vert_{H^{2+\delta}(\Gamma_{1})}
	 \Vert w\Vert_{H^{s+0.5}(\Gamma_{1})}
	,\llabel{EQ11}
\end{align}
for $s\ge 0$. Upon differentiating in time,
we may also estimate $a_t$, $b_t$, and $\psi_t$
in terms of $w$ and~$w_t$.
Next, we write the Piola identity
\begin{equation}
	\partial_{i}b_{ij}=0
	\comma j=1,2,3
	,
	\label{EQ12}
\end{equation}
which follows directly from~\eqref{EQ08}.
As in \eqref{EQ12}, we sum over the repeated indices throughout the paper
unless otherwise indicated.
Now, we denote the ALE velocity and pressure by
\begin{align}
	\begin{split}
		&   v(x,t) = u(\eta(x,t),t)
		\\&
		q(x,t) = p(\eta(x,t),t)
,
	\end{split}
	\llabel{EQ13}
\end{align}
and with the ALE coordinates the Euler-plate system 
\eqref{EQ01} is given by
\begin{align}
	\begin{split}
		&
		\partial_{t} v_i
		+ v_k a_{jk} \partial_{j}v_i
		- \frac{1}{\partial_{3}\psi}\psi_t \partial_{3} v_i
		+ a_{ki}\partial_{k}q
		=0
		,
		\\&
		a_{ki} \partial_{k}v_i=0
	\end{split}
	\label{EQ14}
\end{align}
in $\Omega\times[0,T]$.
The initial conditions read
\begin{equation}
	(v,w,w_t)|_{t=0} = (v_0,0,w_1)
	.
	\label{EQ15}
\end{equation}
The Eulerian slip condition on the bottom boundary becomes
\begin{equation}
	v_3=0
	\inon{on $\Gamma_0$}
	,
	\label{EQ16}
\end{equation}
while the velocity matching condition on the top boundary
is given by
\begin{equation}
	b_{3i}v_i = w_t
	\inon{on $\Gamma_1$}
	,
	\label{EQ17}
\end{equation}
where we used \eqref{EQ08} and~\eqref{EQ03}$_2$.
Next, the plate equation \eqref{EQ02} translates to
\begin{align}
	w_{tt} 
	+\Delta_\hh^2 w
	- \nu \Delta_\hh w_{t}
	= q
	,
	\label{EQ18}
\end{align}
where the pressure satisfies
\begin{equation}
	\int_{\Gamma_1} q = 0
	,
	\label{EQ21}
\end{equation}
for all $t\in[0,T]$.

Throughout the paper, we denote the spaces $L^p((0,T);X(\Omega))$ and
$C([0,T];X(\Omega))$ by  $L^p_TX$ and $C_TX$, respectively. 
In addition, we denote by $C\geq1$ a sufficiently large generic constant that is
independent of the data and may change from line to line. 

\subsection{The main result}
\label{sec12}
Fix $\delta\in (0,1/2]$, and consider
$(v_0,w_1) \in H^{2.5+\delta }\times H^{1+\delta}(\Gamma_1)$. 
We are now ready to state our main result.

\cole
\begin{Theorem}[Local existence]
\label{T01}
Let $\nu \in [0,1]$ and 
$(v_0,w_1) \in H^{2.5+\delta }\times H^{1+\delta}(\Gamma_1)$. 
Moreover, assume the compatibility conditions
\begin{equation} 
	v_{0}\cdot N |_{\Gamma_{1}}
	=w_1
     \andand
	v_{0}\cdot N |_{\Gamma_{0}}=0,
	\label{EQ19}
\end{equation} 
with
\begin{equation} 
	\div v_{0}=0 \inon{in~$\Omega$}
      \andand
	\int_{\Gamma_1} w_1 = 0
	.
	\label{EQ20}
\end{equation}
Then, there exists
a unique solution $(v,q,w)$
on $[0,T]$ to the Euler-plate system \eqref{EQ14}--\eqref{EQ21} such that
\begin{align}
	\begin{split}
		&v \in L^{\infty}([0,T];H^{2.5+\delta}(\Omega))
		\cap C([0,T];H^{2.5+\delta^-}(\Omega))
		,
		\\&
		v_{t} \in L^{\infty}([0,T];H^{0.5+\delta}(\Omega))
		,
		\\&
		q \in L^{\infty}([0,T];H^{1.5+\delta}(\Omega))
		, 
		\\&
		w \in L^{\infty}([0,T];H^{4+\delta}(\Gamma_{1}))
		\cap C([0,T];H^{4+\delta^-}(\Gamma_1))
		,
		\\&
		w_{t} \in L^{\infty}([0,T];H^{2+\delta}(\Gamma_{1}))
		\cap C([0,T];H^{2+\delta^-}(\Gamma_1)) 
		,
	\end{split}
	\label{EQ22}
    \end{align}
where
$T=T(\Vert v_0\Vert_{H^{2.5+\delta}},
\Vert w_1\Vert_{H^{1+\delta}(\Gamma_1)})>0$.
\end{Theorem}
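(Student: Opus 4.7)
The plan is to proceed by regularization of the initial data and passage to the limit via uniform-in-parameter a~priori estimates. Given data $(v_0, w_1)$ at the critical regularity $H^{2.5+\delta}(\Omega)\times H^{1+\delta}(\Gamma_1)$, I would first construct an approximating sequence $(v_0^{(n)}, w_1^{(n)})$ at a higher Sobolev level (matching the regularity hypothesis of \cite{KT}) that converges to $(v_0, w_1)$ in the original topology and continues to satisfy the compatibility conditions \eqref{EQ19}--\eqref{EQ20}. Some care is required because naive mollification does not preserve the normal-trace matching nor incompressibility; one can correct this by first mollifying $w_1$ to obtain a zero-mean $w_1^{(n)}$ and then adjusting the mollification of $v_0$ by the gradient of a suitable harmonic function so that $v_0^{(n)}\cdot N|_{\Gamma_1}=w_1^{(n)}$, $v_0^{(n)}\cdot N|_{\Gamma_0}=0$, and $\div v_0^{(n)}=0$. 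For each $n$, the high-regularity existence result from \cite{KT} then yields a strong solution $(v^{(n)}, q^{(n)}, w^{(n)})$ on some interval $[0, T_n]$.

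The heart of the argument is to show that $T_n\ge T$ for a uniform $T=T(\|v_0\|_{H^{2.5+\delta}},\|w_1\|_{H^{1+\delta}(\Gamma_1)})$. To this end I would introduce the energy
\begin{equation*}
E(t) = \|v\|_{H^{2.5+\delta}(\Omega)}^2 + \|v_t\|_{H^{0.5+\delta}(\Omega)}^2 + \|w\|_{H^{4+\delta}(\Gamma_1)}^2 + \|w_t\|_{H^{2+\delta}(\Gamma_1)}^2
\end{equation*}
and prove a differential inequality $E'(t)\le P(E(t))$ with polynomial $P$ independent of $n$, from which a uniform lower bound on $T_n$ follows by a continuation argument. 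The estimate for $E$ combines three ingredients: (i) tangential energy estimates, applying a tangential fractional operator of order $2+\delta$ to \eqref{EQ14}, pairing with the corresponding tangential derivative of $v$, and absorbing boundary contributions on $\Gamma_1$ through the plate equation \eqref{EQ18} to produce the $H^{4+\delta}\times H^{2+\delta}$ norms of $(w, w_t)$; (ii) a vorticity estimate, since in ALE variables $\curl v$ satisfies a transport equation with a stretching-type source controlled by $\|\nabla v\|_{L^\infty}$; and (iii) elliptic estimates for $q$ from the Poisson problem obtained by taking the ALE divergence of the momentum equation, with Neumann/Robin-type data determined by the plate equation on $\Gamma_1$ and the slip condition on $\Gamma_0$. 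The missing normal component of $\nabla v$ is then recovered from $\div v=0$ and $\curl v$. I expect this to be the main obstacle: at the borderline regularity $H^{2.5+\delta}$, Sobolev embeddings are sharp, commutators are not automatically lower order, and balancing the three estimates above demands the precise bookkeeping developed in \cite{KT}, but pushed down by half a derivative.

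Once $T$ is uniform, passage to the limit is routine: weak-$\ast$ compactness in the $L^\infty_T$ spaces of \eqref{EQ22}, combined with Aubin--Lions compactness, upgrades to strong convergence in lower-regularity norms, enough to pass to the limit in the nonlinear terms, the incompressibility constraint, the boundary conditions \eqref{EQ16}--\eqref{EQ17}, and the plate equation \eqref{EQ18}. Strong continuity in time at the exponents $H^{2.5+\delta^-}$ and $H^{4+\delta^-}$ follows by interpolation between weak-$\ast$ continuity in the top space and strong continuity in a lower one.

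For uniqueness, I would take two solutions with the same data, set $V=v^{(1)}-v^{(2)}$ and $W=w^{(1)}-w^{(2)}$, and close an energy estimate at one level below the maximum, for example in $H^{1.5+\delta}\times H^{3+\delta}$. The loss of a derivative is forced by the difference vorticity equation: a term of the form $(\curl v^{(1)})\cdot\nabla V$ is not lower order at the top level, so one must work one derivative down and control the pressure for the difference via an elliptic problem with low-regularity Robin-type boundary data, adapted from the classical theory of \cite{LM}. Gr\"onwall's inequality then forces $V\equiv 0$ and $W\equiv 0$, completing the proof.
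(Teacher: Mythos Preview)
Your overall architecture matches the paper: regularize the data while preserving \eqref{EQ19}--\eqref{EQ20}, invoke the high-regularity existence from \cite{KT}, obtain uniform-in-$n$ bounds, and pass to the limit; uniqueness at one level below via pressure, tangential, and vorticity estimates for the difference. The regularization device you describe (adjust a mollified $v_0$ by a harmonic gradient to restore the boundary matching and incompressibility) and the uniqueness sketch are essentially what the paper does.

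There is, however, a genuine gap in your uniform-time argument. You propose to control the critical-level energy $E(t)$ built from $\|v\|_{H^{2.5+\delta}}$, $\|w\|_{H^{4+\delta}}$, $\|w_t\|_{H^{2+\delta}}$ via $E'\le P(E)$, and then deduce $T_n\ge T$ by continuation. But the approximating solutions $(v^{(n)},q^{(n)},w^{(n)})$ are constructed in \cite{KT} at the \emph{high} regularity $H^4\times H^{5.5}(\Gamma_1)\times H^{3.5}(\Gamma_1)$, and their lifespan $T_n$ is tied to those high norms, not to $E$. A bound on $E$ does not by itself prevent blow-up of $\|v^{(n)}\|_{H^4}$ at some $T_n<T$, so you cannot conclude $T_n\ge T$ from your inequality alone. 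In fact, the estimate you are proposing to derive is precisely Theorem~\ref{T02}, which the paper imports from \cite{KT} rather than re-proves; so you are both redoing work already available and omitting the step that is actually new here.

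What your plan is missing is Proposition~\ref{P01}: once the low norms are bounded by $C_0M$ and the coefficients satisfy the $\epsilon$-smallness \eqref{EQ28} on $[0,T_0]$ (all of which comes from Theorem~\ref{T02}), one proves that the \emph{high}-norm quantity
\[
Q(t)=\bigl(\|v\|_{H^4}+\|w\|_{H^{5.5}(\Gamma_1)}+\|w_t\|_{H^{3.5}(\Gamma_1)}+1\bigr)^2
\]
satisfies a \emph{linear} Gr\"onwall inequality $Q(t)\le CQ(0)+\int_0^t C_M Q$, hence $Q(t)\le CQ(0)e^{C_M t}$, with $C_M$ depending only on the low-norm bound $M$. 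It is this exponential control of the high norms that drives the continuation argument: choosing a break-even time $\bar T_0$ with $Ce^{C_M\bar T_0}=K$ (the amplification constant from Theorem~\ref{T03}), one can repeatedly restart the high-regularity construction and extend every approximant to the common interval $[0,T^*]=[0,\min\{T_0,\bar T_0\}]$. Without this separate high-norm estimate your scheme cannot guarantee that the regularized solutions survive long enough to take the limit.
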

\colb

Above, $\delta^{-}>0$ denotes any positive constant
less than~$\delta$.
As mentioned, the a~priori bounds for this existence result are established in~\cite{KT}.
Before recalling them, we remark that the functions $a$, $b$, and $J$ given by \eqref{EQ06}, \eqref{EQ07},
and \eqref{EQ08}, respectively, are close to the identity for a sufficiently small
amount of time, and this is key to our proof. 
Now, we denote by $C_\text{abs}>0$
an absolute constant
much larger than the largest $C$ appearing in the text below,
and consider $0<\epsilon_0 < 1/2C_\text{abs}$. This
is needed to close our estimates; see~Section~\ref{sec.con}.
Next, we fix
\begin{align}
	0<\epsilon<\frac{\epsilon_0}{2C_\text{abs}}
	.\label{EQ23}
\end{align}
This choice
suffices for the
continuation argument when we construct solutions 
as well as the absorption arguments 
we employ in the pressure estimates
\eqref{EQ55} and \eqref{EQ58}--\eqref{EQ61}, div-curl estimates
\eqref{EQ131}--\eqref{EQ134}, and in concluding
the proof of Proposition~\ref{P01}, i.e.,~\eqref{EQ136}.  
\colb
Now, we cite~\cite[Theorem 2.1]{KT} which gives us 
needed a~priori estimates, including the smallness of
important quantities; see also~\cite[Lemma 3.1]{KT}.

\cole
\begin{Theorem}[A~priori~estimates~for~existence]
	\label{T02}
	Let $\nu\in [0,1]$, and 
	assume that
  \begin{equation*}
	(v,w) \in (L^{\infty}_TH^4, L^{\infty}_TH^{5.5})   
  \end{equation*}
	is a solution
	of the Euler-plate system \eqref{EQ14}--\eqref{EQ21}
	on an interval $[0,T]$ with
	\begin{align}
		\begin{split}
			\Vert v_0\Vert_{H^{2.5+\delta}},
			\Vert w_1\Vert_{H^{2+\delta}(\Gamma_1)}
			\leq M   
			,
		\end{split}
		\label{EQ24}
	\end{align}
	where $M\geq1$.
	Then, there exists $T_0>0$ such that
	\begin{align} 
		\begin{split} 
			&\Vert v\Vert_{H^{2.5+\delta}}, 
			\Vert w\Vert_{H^{4+\delta}(\Gamma_1)}, 
			\Vert w_t\Vert_{H^{2+\delta}(\Gamma_1)}, 
			\Vert \psi\Vert_{H^{4.5+\delta}}, 
			\Vert \psi_t\Vert_{H^{2.5+\delta}}, \Vert a\Vert_{H^{3.5+\delta}}
			\leq C_0 M
			,
		\end{split}
		\label{EQ25}
	\end{align}
	with
	\begin{equation}
		\nu^{1/2} \Vert w_t\Vert_{L^2H^{3+\delta}(\Gamma_1\times[0,\min\{T_0,T\}])} \leq C_0 M   
		\label{EQ26}
	\end{equation}
	and
	\begin{align}
		\begin{split}
			&   \Vert v_t\Vert_{H^{0.5+\delta}},
			\Vert w_{tt}\Vert_{H^{\delta}(\Gamma_1)},
			\Vert q\Vert_{H^{1.5+\delta}}
			\leq C_0 M
			,
		\end{split}
		\label{EQ27}
	\end{align}
	where $ t\in[0,\min\{T_0,T\}]$, and $C_0\geq2$ is a constant.
Moreover, 
	  \begin{align}
	  	\Vert a-I\Vert_{H^{1.5+\delta}},
	  	\Vert \tda-I\Vert_{H^{1.5+\delta}},
	  	\Vert J-I\Vert_{H^{1.5+\delta}},
	  	\Vert J-I\Vert_{L^{\infty}}
	  	< \epsilon
	  	\comma t\in [0,\min\{T_0,T\}],
	  	\label{EQ28}
	  \end{align}
where $\epsilon>0$ is given by \eqref{EQ23},
and $T_0$ depends on $M$ and~$\epsilon$.   
\end{Theorem}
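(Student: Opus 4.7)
\bigskip

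\noindent\textbf{Proof sketch.}
The plan is to establish a closed differential inequality for the critical-level energy
\begin{equation*}
E(t) = \Vert v\Vert_{H^{2.5+\delta}}^2 + \Vert w\Vert_{H^{4+\delta}(\Gamma_1)}^2 + \Vert w_t\Vert_{H^{2+\delta}(\Gamma_1)}^2 + \Vert q\Vert_{H^{1.5+\delta}}^2,
\end{equation*}
together with the auxiliary quantities $\Vert\psi\Vert_{H^{4.5+\delta}}$, $\Vert\psi_t\Vert_{H^{2.5+\delta}}$, and $\Vert a\Vert_{H^{3.5+\delta}}$, of the form $\frac{d}{dt}E \lesssim 1 + P(E)$ for some polynomial $P$. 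An ODE comparison then produces a time $T_0 = T_0(M,\epsilon)$ on which $E \le (C_0 M)^2$ and the geometric quantities $a,b,J$ stay $\epsilon$-close to the identity in $H^{1.5+\delta}$ and $L^\infty$. The higher-regularity hypothesis $(v,w)\in L^\infty_T(H^4\times H^{5.5})$ enters only to justify the integrations by parts below; no quantitative use is made of those norms.

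The derivation of $\frac{d}{dt}E$ splits into three parts. First, for the tangential energy I apply a tangential derivative of order $2+\delta$ (say $\partial^{\alpha}$ with $\alpha_3=0$) to \eqref{EQ14}$_1$, test against the same derivative applied to $v$, and integrate by parts using the divergence-free condition \eqref{EQ14}$_2$ together with the boundary conditions \eqref{EQ16}--\eqref{EQ17}. The surviving $\Gamma_1$ contribution, which is essentially $\int_{\Gamma_1} q\,\partial^{\alpha} w_t$, is paired with the tangentially differentiated plate equation \eqref{EQ18}, producing an exact time derivative of $\Vert\Delta_\hh w\Vert_{H^{2+\delta}}^2 + \Vert w_t\Vert_{H^{2+\delta}}^2$ and a nonnegative dissipation $\nu\Vert w_t\Vert_{H^{3+\delta}}^2$; this accounts for \eqref{EQ25}--\eqref{EQ26}. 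Second, to obtain the pressure bound in \eqref{EQ27}, I derive the elliptic problem
\begin{equation*}
\partial_k\bigl(J^{-1} b_{\ell k} b_{\ell i}\partial_i q\bigr) = \mathcal{F}(v,\psi_t,a) \inon{in~$\Omega$},
\end{equation*}
with a Robin-type boundary condition on $\Gamma_1$ obtained by contracting \eqref{EQ14}$_1$ with $b_{3i}$, differentiating $b_{3i}v_i = w_t$ in time, and substituting $w_{tt}$ from \eqref{EQ18} to trade second time derivatives for $\Delta_\hh^2 w - \nu\Delta_\hh w_t + q$; on $\Gamma_0$ a Neumann condition follows from $v_3=0$. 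Elliptic regularity with coefficients $\epsilon$-close to the identity then gives $\Vert q\Vert_{H^{1.5+\delta}}\lesssim M$, after which reading off $v_t$ from \eqref{EQ14}$_1$ delivers the $H^{0.5+\delta}$ bound. Third, the ALE vorticity satisfies a transport-type equation along $v$, so its $H^{1.5+\delta}$ norm is controlled by standard commutator estimates; combined with the divergence constraint and the boundary datum $b_{3i}v_i=w_t$, a div-curl lemma adapted to the variable-coefficient setting recovers the full $H^{2.5+\delta}$ bound on $v$.

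The main obstacle is the commutator $[\partial^{\alpha}, a_{ki}]\partial_k q$ that arises when the tangential derivative of order $2+\delta$ is moved inside the pressure term: at the critical level $a$ lies only in $H^{1.5+\delta}$ by \eqref{EQ28}, so a naive product estimate does not close against $\Vert q\Vert_{H^{1.5+\delta}}$. The resolution is the decomposition $a = I + (a-I)$: the identity piece commutes, while the remainder, being $\epsilon$-small in $H^{1.5+\delta}$ by \eqref{EQ28}, contributes a factor that can be absorbed to the left-hand side after choosing $\epsilon<1/(2C_{\rm abs})$. The smallness \eqref{EQ28} is itself obtained a posteriori from $w|_{t=0}=0$ (so $\psi|_{t=0}(x)=x_3$ and $a|_{t=0}=J|_{t=0}=I$) by integrating $a_t, b_t, J_t$, all controlled by $w_t\in H^{2+\delta}$, and shrinking $T_0$ until the time increment drops below $\epsilon$. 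This closes a coupled bootstrap between the energy bound and the geometric smallness, yielding \eqref{EQ25}--\eqref{EQ28} on $[0,\min\{T_0,T\}]$.
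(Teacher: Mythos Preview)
The paper does not prove Theorem~\ref{T02} itself: it is quoted from \cite[Theorem~2.1]{KT} (with \cite[Lemma~3.1]{KT} for the smallness~\eqref{EQ28}), accompanied only by the remark that the argument there, stated for $C^\infty$ solutions, goes through under the regularity $(v,w)\in L^\infty_T(H^4\times H^{5.5})$. So there is no in-house proof to compare against.

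That said, your sketch accurately reproduces the architecture of the argument in \cite{KT}, and it is precisely the template the present paper re-executes at the $H^4$ level in Sections~\ref{sec.pre}--\ref{sec.con} for Proposition~\ref{P01}: an elliptic Robin problem for $q$ (cf.~\eqref{EQ49}--\eqref{EQ52}), tangential estimates in which the $\Gamma_1$ pressure term is cancelled against the plate energy via $b_{3i}v_i=w_t$ (cf.~\eqref{EQ71}--\eqref{EQ80}), ALE vorticity transport, and div-curl recovery of the full velocity norm. Your identification of the commutator $[\partial^\alpha,a_{ki}]\partial_k q$ as the critical obstacle and its resolution by the splitting $a=I+(a-I)$ with absorption via~\eqref{EQ28} matches what the paper does (see~\eqref{EQ56}--\eqref{EQ61} and~\eqref{EQ131}--\eqref{EQ134}), and your mechanism for obtaining~\eqref{EQ28} from $w|_{t=0}=0$ by integrating $a_t,b_t,J_t$ is the content of \cite[Lemma~3.1]{KT}. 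In short, your outline is correct and coincides with the cited proof; there is nothing to add beyond noting that the paper itself defers to the reference.
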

\colb

In particular, $C_0$ and $T_0$ do not depend on~$\nu$.

We remark that in~\cite{KT} solutions are assumed to be $C^\infty$ smooth in establishing the above theorem. 
However, the proof holds for the regularity stated in
Theorem~\ref{T02}.
We now
fix $C_0\geq1$ for the rest of this paper and omit stating any dependence on it.

We aim to utilize the a~priori estimates and obtain a sequence of uniformly bounded
solutions. To achieve this, we first regularize the initial data in \eqref{EQ34}--\eqref{EQ38},
then we invoke the existence result~\cite[Theorem 2.3]{KT}, stated
next,
and pass to the limit.

\cole
\begin{Theorem}
	\label{T03}
	(Local existence with regular data)
	Let $\nu \in [0,1]$. 
	Assume that initial data
	\begin{equation}
		(v_{0}, w_{1})\in H^{4} \times   H^{3.5}(\Gamma_{1})
		\comma \Vert v_0\Vert_{H^{4}}+\Vert w_1\Vert_{H^{3.5}(\Gamma_1)}\le \bar{M}
		,\llabel{EQ29}
	\end{equation} 
	for some $\bar{M}>0$,
	satisfy the
	compatibility conditions \eqref{EQ19} and~\eqref{EQ20}.
	Then there exists a unique local-in-time solution $(v,q,w)$ to the 
	Euler-plate system 
	\eqref{EQ14}--\eqref{EQ21} with the initial data $(v_0,w_1)$
	such that
	\begin{align}
		\begin{split}
			&v \in L^{\infty}([0,T];H^{4}(\Omega))
			\cap C([0,T];H^{4^-}(\Omega))
			,
			\\&
			v_{t} \in L^{\infty}([0,T];H^{2}(\Omega))
			,
			\\&
			q \in L^{\infty}([0,T];H^{3}(\Omega))
			, 
			\\&
			w \in L^{\infty}([0,T];H^{5.5}(\Gamma_{1}))
			\cap C([0,T];H^{5.5^-}(\Gamma_1))
			,
			\\&
			w_{t} \in L^{\infty}([0,T];H^{3.5}(\Gamma_{1}))
			\cap C([0,T];H^{3.5^-}(\Gamma_1)) 
			,
		\end{split}
		\label{EQ30}
	\end{align}
	and
	\begin{align}
		\Vert v\Vert_{L^\infty_T H^{4}} + \Vert w\Vert_{L^\infty_T H^{5.5}(\Gamma_1)}
		+\Vert w_t\Vert_{L^\infty_T H^{3.5}(\Gamma_1)} \le K \bar{M}
		,\llabel{EQ31}
	\end{align}
	for some constant $K>1$ and time $T>0$ depending on $\bar{M}$ and~$K$.
	In particular, both $K$ and $T$ are independent of~$\nu$.
\end{Theorem}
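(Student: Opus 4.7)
The plan is to construct solutions via a Picard-type iteration combined with a parabolic regularization to compensate for the lack of smoothing in the Euler equation. Given $(v^{(n-1)}, w^{(n-1)})$ in a suitable closed ball, we freeze the cofactor matrix $b^{(n-1)}$, the Jacobian $J^{(n-1)}$, and the inverse $a^{(n-1)}$ produced from $w^{(n-1)}$ through the elliptic extension \eqref{EQ03}, and then solve the linear system: a transport-type equation with variable coefficients for $v^{(n)}$ (augmented, if necessary, by a small dissipation $-\varepsilon \Delta v^{(n)}$ so that standard parabolic theory applies at each step), an elliptic boundary-value problem for $q^{(n)}$ obtained by taking a cofactor-weighted divergence of the momentum equation, and the damped fourth-order plate equation \eqref{EQ18} driven by $q^{(n)}$ through the matching condition \eqref{EQ17}. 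The artificial viscosity $\varepsilon>0$ is removed at the end, once $\varepsilon$-uniform bounds are available.

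The core of the argument is a uniform bound
\begin{equation*}
\|v^{(n)}\|_{L^\infty_T H^{4}} + \|w^{(n)}\|_{L^\infty_T H^{5.5}(\Gamma_1)}
+ \|w^{(n)}_t\|_{L^\infty_T H^{3.5}(\Gamma_1)} \leq K\bar{M}
\end{equation*}
on a time interval $T$ that is independent of $n$, of $\varepsilon$, and of $\nu$. Three ingredients combine to yield this. First, tangential differentiation of the momentum equation up to the top order, tested against symmetrized quantities, produces boundary traces on $\Gamma_1$ that are absorbed by the tangential energies of the plate through $b_{3i}v_i = w_t$, with the aid of the Piola identity \eqref{EQ12}. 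Second, the vorticity of the pulled-back velocity satisfies a transport equation with lower-order forcing, controlling the rotational component. Third, $q^{(n)}$ is bounded via elliptic regularity for the variable-coefficient problem, the Neumann-type data on $\Gamma_1$ being expressed through $w_{tt}$ and the time-differentiated matching relation. The smallness of $\|J^{(n-1)} - I\|_{L^\infty}$ and $\|a^{(n-1)}-I\|_{H^{1.5+\delta}}$ needed in these estimates, of the type \eqref{EQ28}, is secured by choosing $T$ small and using that $w$ starts from zero.

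Convergence of the iteration is established in a lower topology. Setting $V^{(n)} = v^{(n)} - v^{(n-1)}$ and $W^{(n)} = w^{(n)} - w^{(n-1)}$, one obtains a linear system whose right-hand sides involve differences $a^{(n-1)} - a^{(n-2)}$, $b^{(n-1)} - b^{(n-2)}$, and $\psi^{(n-1)}_t - \psi^{(n-2)}_t$; these commutator terms cost one derivative, so the natural space for a contraction is one step below the top, say $H^{3}\times H^{4.5}(\Gamma_1)$. A Banach fixed-point argument in this lower space, combined with the uniform top-order bound above, produces a strong limit $(v,q,w)$ enjoying the regularity \eqref{EQ30} with weak-$\ast$ limits at the highest level and strong limits below, the latter being sufficient to pass to the limit in all nonlinear terms. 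Uniqueness is obtained from the analogous difference estimate applied directly to two solutions. The independence of $K$ and $T$ from $\nu$ is preserved because the dissipation $-\nu\Delta_{\hh}w_t$ contributes a non-negative term $\nu\|\nabla_{\hh}w_t\|_{L^2}^2$ to the plate energy identity, which is simply dropped in the upper bound.

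The main obstacle is handling the top-order boundary contributions in the tangential energy method for $v$: fourth-order tangential derivatives of the velocity generate traces of $q$ on $\Gamma_1$ and of $w_{tt}$ that sit at precisely the regularity threshold allowed by \eqref{EQ30}, and closing the estimate requires a careful balance between the variable-coefficient elliptic estimates for the pressure and the hyperbolic energy of the plate, using that $\Gamma_0$ and $\Gamma_1$ are flat. A secondary, but essential, difficulty is guaranteeing that the ALE map remains a diffeomorphism throughout the iteration, which forces the common time $T$ to be chosen small enough that the Jacobian smallness of the type \eqref{EQ28} is propagated from one iterate to the next.
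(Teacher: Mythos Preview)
The paper does not prove Theorem~\ref{T03}; it is imported verbatim as \cite[Theorem~2.3]{KT}. The only information the paper supplies about its proof is a short summary in the paragraph preceding the continuation argument: in \cite{KT} one first solves the ALE Euler system \eqref{EQ14}--\eqref{EQ17} with the plate data $(w,w_t,a,b,\psi,J)$ frozen, then solves the plate equation \eqref{EQ18} for given pressure, and finally closes via a fixed point argument. So there is no detailed proof here to benchmark your proposal against.

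That said, your outline is compatible with this summary and with the estimates the paper actually carries out at the same regularity level in Lemmas~\ref{L.Pre}, \ref{L.Tan}, and \ref{L.Vor}: the tangential/vorticity/pressure splitting, the boundary cancellation through \eqref{EQ17} and the Piola identity, and the smallness \eqref{EQ28} coming from $w(0)=0$ are exactly the machinery used there. Two points of divergence are worth flagging. First, the paper's summary of \cite{KT} makes no mention of a parabolic regularization $-\varepsilon\Delta v^{(n)}$; the linear Euler step with frozen coefficients is handled directly as a transport problem, so your artificial viscosity is an extra layer that is not needed (and would add the burden of $\varepsilon$-uniform estimates). Second, the decoupling in \cite{KT} appears to be sequential---solve Euler, then plate, then iterate---rather than a simultaneous Picard update of $(v^{(n)},q^{(n)},w^{(n)})$; this is a minor structural difference, but it affects where the contraction is proven. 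Your identification of the main obstacle, namely closing the top-order boundary terms in the tangential estimate at exactly the threshold regularity, is on target and is precisely what the careful commutator bookkeeping in Section~\ref{sec.tan} addresses.
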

\colb

When we employ this theorem for each member
of the approximating sequence,
the time of existence may shrink to zero
in the limit. This is
because higher norms of the 
initial data may grow indefinitely.
Therefore, we may need to extend the lifespan of solutions
to obtain a uniform time of existence
for this sequence.
We achieve this by 
establishing a linear ODE inequality
on the higher norms of the solutions
as long as they exist and \eqref{EQ28} holds.
More precisely, we prove the following
statement.

\cole
\begin{Proposition}
\label{P01}
Let $\nu \in [0,1]$ and suppose that
	$(v,w,q)$
	is a solution of the Euler-plate
	system~\eqref{EQ14}--\eqref{EQ21} on~$[0,T]$,
	as described in~\eqref{EQ30}.
	In addition, assume that
	\eqref{EQ25}--\eqref{EQ28}
	hold on $[0,\min\{T_0,T\}]$ 
	for $M>0$,
	where $\epsilon>0$
	is given by \eqref{EQ23}
	and $T_0>0$ is 
	given by Theorem~\ref{T02}.
	Then, for 
	\begin{align}
		Q(t)
		=
		(\Vert v\Vert_{H^{4}}
		+\Vert w\Vert_{H^{5.5}(\Gamma_{1})}
		+\Vert w_t\Vert_{H^{3.5}(\Gamma_{1})}
		+1)^2
		,
		\label{EQ32}   
	\end{align}
	we have
	\begin{align}
		Q(t)+\nu \int_{0}^{t} \Vert w_t\Vert_{H^{4.5}(\Gamma_{1})}^2
		 \le
		 CQ(0)+\int_{0}^{t}C_MQ(s)\,ds
		 \comma
		 t \in [0,\min\{T_0,T\}]
		 ,\label{EQ33}
	\end{align}
	where the constants $C,C_M>0$ are independent of
	$Q(0)$ and $\nu$, and $C>0$ is independent of~$M$.
	\end{Proposition}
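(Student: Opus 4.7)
The plan is to derive \eqref{EQ33} by coupling a top-order tangential energy estimate for the fluid--plate system with an $H^3$ vorticity transport estimate, elliptic pressure bounds, and div-curl/plate elliptic regularity to recover full Sobolev norms from tangential and curl information. Throughout, the smallness in \eqref{EQ28} of $a-I$, $b-I$, $J-I$ and the lower-regularity bounds \eqref{EQ25}--\eqref{EQ27} at the $M$-level will be used to reduce every nonlinearity to either $C_M Q(s)$ or an $\epsilon$-small top-order term absorbable on the left. The initial constant $C$ in \eqref{EQ33} remains independent of $M$ because at $t=0$ one has $w=0$, and hence $\psi=x_3$, $a=b=J=I$, so the top-order norms at $t=0$ match the straight Sobolev norms of $(v_0,w_1)$ with an absolute constant.

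First I would apply a purely tangential multiderivative $\bar\partial^\alpha$ with $|\alpha|=4$, where $\bar\partial\in\{\partial_1,\partial_2\}$, to the ALE momentum equation \eqref{EQ14}$_1$ and test against $\bar\partial^\alpha v$ in $L^2(\Omega)$; simultaneously I apply $\bar\partial^\alpha$ to the plate equation \eqref{EQ18} and test against $\bar\partial^\alpha w_t$ in $L^2(\Gamma_1)$. After integrating the pressure term by parts and using the matching condition \eqref{EQ17}, the Piola identity \eqref{EQ12}, and the no-penetration condition \eqref{EQ16}, the leading boundary pressure contribution cancels, producing the natural energy identity
\begin{align*}
    \tfrac12\tfrac{d}{dt}\bigl(\|\bar\partial^\alpha v\|_{L^2(\Omega)}^2 + \|\bar\partial^\alpha \Delta_\hh w\|_{L^2(\Gamma_1)}^2 + \|\bar\partial^\alpha w_t\|_{L^2(\Gamma_1)}^2\bigr) + \nu\|\bar\partial^\alpha \nabla_\hh w_t\|_{L^2(\Gamma_1)}^2 = \mathcal R,
\end{align*}
where $\mathcal R$ collects commutators arising from the coefficients $a$, $b$, and $\psi_t/\partial_3\psi$ together with interface commutators from $[\bar\partial^\alpha,b_{3i}]v_i$. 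Each term in $\mathcal R$ is estimated by standard Sobolev product/commutator bounds: one factor is placed at low regularity and controlled by $M$ via \eqref{EQ25}, the other carries at most one $Q^{1/2}$, so Young's inequality gives a bound of the form $C_M Q(s)+\epsilon\cdot(\text{top order})$ with the top-order piece absorbed on the left via \eqref{EQ28}.

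Second, to recover the full $H^4$ norm of $v$, I would derive an $H^3$ bound on the vorticity $\omega=\nabla\times v$ by taking the ALE curl of \eqref{EQ14}$_1$, which produces a transport equation with drift $v_k a_{jk}-\delta_{j3}\psi_t/\partial_3\psi$ and lower-order forcing in $v$. A direct $H^3$ energy estimate (using $\|a\|_{H^{3.5+\delta}},\|\psi\|_{H^{4.5+\delta}}\lesssim M$ from \eqref{EQ25} to bound the drift and its divergence) yields $\|\omega(t)\|_{H^3}^2\le\|\omega(0)\|_{H^3}^2+\int_0^t C_M Q(s)\,ds$. Combining this with the ALE divergence $a_{ki}\partial_k v_i=0$, the boundary data \eqref{EQ16}--\eqref{EQ17}, and the tangential estimate above, a div-curl inequality with variable coefficients close to the identity by \eqref{EQ28} produces $\|v\|_{H^4}^2\le C\|\bar\partial^\alpha v\|_{L^2}^2+C\|\omega\|_{H^3}^2+C_M Q(s)$ modulo an $\epsilon$-absorbable top-order remainder. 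For the pressure, applying $a_{ji}\partial_j$ to \eqref{EQ14}$_1$ gives an elliptic equation $-\partial_k(a_{ki}a_{ji}\partial_j q)=\partial_k G_k(v,\nabla v,\psi_t,a)$ in $\Omega$ with Neumann datum on $\Gamma_0$ and Dirichlet datum $q|_{\Gamma_1}=w_{tt}+\Delta_\hh^2 w-\nu\Delta_\hh w_t$ from \eqref{EQ18}; elliptic regularity then gives $\|q\|_{H^3}$ in terms of $\|v\|_{H^4}$, $M$-controlled norms, and the boundary data, while viewing \eqref{EQ18} as a biharmonic equation for $w$ with source $q-w_{tt}+\nu\Delta_\hh w_t$ recovers $\|w\|_{H^{5.5}(\Gamma_1)}$ from $\|q\|_{H^3}$ and $\|w_{tt}\|_{H^{1.5}(\Gamma_1)}$, where the latter is read off the plate equation itself.

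The main obstacle is closing the loop between these four estimates so that every top-order product is linear in $Q$ and no term of the form $M Q^{3/2}$ or $Q^2$ slips into the right-hand side, and so that the constant in front of $Q(0)$ stays independent of $M$. The two delicate points are (i) the top-order commutator $(\bar\partial^\alpha a_{ki})\partial_k q$ in $\mathcal R$, which is critical because $\|a\|_{H^4}$ sits just above the $H^{3.5+\delta}$ control in \eqref{EQ25}; I would handle this by interpolation between $\|a-I\|_{H^{1.5+\delta}}<\epsilon$ from \eqref{EQ28} and a norm paid for by $\psi\in H^{4.5+\delta}$, so the resulting coefficient is either $\epsilon$ (absorbable) or bounded by $M$; and (ii) the interface commutator $\int_{\Gamma_1}[\bar\partial^\alpha,b_{3i}]v_i\cdot\bar\partial^\alpha q$, which is distributed using trace estimates for $q$ derived from the pressure bound and again reduces to \eqref{EQ28}. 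Once the top-order couplings are bounded by $\epsilon\cdot(\text{top order})+C_M Q(s)$, summing the tangential, vorticity, div-curl, pressure, and plate estimates, absorbing the $\epsilon$-terms on the left, and integrating in time yields \eqref{EQ33} with the stated constants.
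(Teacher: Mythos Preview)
Your derivative count in the tangential step is off by one half, and this breaks the scheme. With $|\alpha|=4$ tangential derivatives on both the momentum and the plate equation, the plate energy sits at $\|\bar\partial^\alpha w_t\|_{L^2}^2\sim\|w_t\|_{H^4}^2$ and $\|\bar\partial^\alpha\Delta_\hh w\|_{L^2}^2\sim\|w\|_{H^6}^2$, a half-derivative \emph{above} both the regularity class \eqref{EQ30} and the quantity $Q$ you are asked to bound; the identity you write is therefore not even justified for the solutions at hand. The difficulty is genuine: the boundary-pressure cancellation forces the same number of tangential derivatives on both sides, but the natural scaling here is asymmetric ($v\in H^4$ versus $w_t\in H^{3.5}$). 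The paper resolves this by applying $D\Lambda^{2}$ to the velocity equation and testing with $D\Lambda^{3}v$; the self-adjointness of $\Lambda$ redistributes a half-derivative so that the boundary term becomes $\int_{\Gamma_1}D\Lambda^{2.5}q\,D\Lambda^{2.5}(b_{3i}v_i)=\int_{\Gamma_1}D\Lambda^{2.5}q\,D\Lambda^{2.5}w_t$, matching the plate at exactly the $H^{3.5}$ level. The price is that the velocity ``energy'' $\frac12\frac{d}{dt}\int J\,\Lambda^2 Dv\,\Lambda^3 Dv$ is not a norm; it is handled after time-integration by $\|J-I\|_{L^\infty}\|v\|_{H^4}^2+\|v\|_{L^2}^{1/4}\|v\|_{H^4}^{7/4}$ and then closed via div-curl and Young's inequality.

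Your pressure/plate recovery is also circular. Using $q|_{\Gamma_1}=w_{tt}+\Delta_\hh^2 w-\nu\Delta_\hh w_t$ as a Dirichlet datum for an $H^3$ pressure estimate would require $\|q|_{\Gamma_1}\|_{H^{2.5}}$, hence $\|w\|_{H^{6.5}}$, $\|w_t\|_{H^{4.5}}$, $\|w_{tt}\|_{H^{2.5}}$, none of which are available. Likewise, ``reading $\|w_{tt}\|_{H^{1.5}}$ off the plate equation'' returns $\|\Delta_\hh^2 w\|_{H^{1.5}}=\|w\|_{H^{5.5}}$, so the biharmonic step gives $\|w\|_{H^{5.5}}\lesssim\|w\|_{H^{5.5}}+\cdots$. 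The paper avoids both issues: $\|w\|_{H^{5.5}}$ and $\|w_t\|_{H^{3.5}}$ come \emph{directly} from the plate energy at the $D\Lambda^{2.5}$ level, and the pressure is estimated via a \emph{Robin} condition on $\Gamma_1$ obtained by combining the normal component of \eqref{EQ14}$_1$ with the time derivative of \eqref{EQ17} and \eqref{EQ18}, which eliminates $w_{tt}$ and leaves boundary data $f_{\Gamma_1}$ containing only $\Delta_\hh^2 w$, $\nu\Delta_\hh w_t$, and lower-order terms, so that $\|f_{\Gamma_1}\|_{H^{1.5}(\Gamma_1)}\le C_M Q^{1/2}$ needs precisely $\|w\|_{H^{5.5}}$ and $\|w_t\|_{H^{3.5}}$.
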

\colb

This proposition guarantees the at-most-exponential growth of approximating solutions
on the time interval $[0,\min\{T_0,T\}]$.
Combining this with the choice of $\epsilon$
given in \eqref{EQ23}, we 
obtain a uniform time of existence.
Then, we pass to the limit to obtain a solution to the
Euler-plate system with the regularity specified in Theorem~\ref{T01}.

In Section~\ref{sec_ex},
we first prove the existence part of Theorem~\ref{T01}
assuming that Proposition~\ref{P01} holds.
Next, we establish Proposition~\ref{P01}.
This is done in three steps,
presented in Section~\ref{sec.pre}, \ref{sec.tan},
and~\ref{sec.vor} for the pressure, tangential and
vorticity estimates, respectively.
Then, in Section~\ref{sec.con}, we collect them to conclude 
the proof of Proposition~\ref{P01}. Finally,
in Section~\ref{sec.un}, we establish the uniqueness of solutions.

\section{Proof of Theorem~\ref{T01}: Existence}\label{sec_ex}

Here, we present the proof of the existence part of Theorem~\ref{T01}.
We use $(\bar{v}_0,\bar{w}_1)$ instead of $(v_0,w_1)$
to denote the initial data.

\begin{proof}[Proof of Theorem~\ref{T01}: Existence]
Fix $\nu \in [0,1]$, and 
consider $(\bar{v}_0,\bar{w}_1) \in H^{2.5+\delta} \times H^{2+\delta}$
satisfying the compatibility conditions \eqref{EQ19} and~\eqref{EQ20}.
Moreover, assume \eqref{EQ24} holds.
First, we regularize the initial data.
Let $r \in (0,1]$
denote the approximation parameter. Since $\bar{w}_1$ is periodic, 
by truncating the Fourier modes we define
	\begin{align}
		w_1^{(r)}(x)
		= \sum_{|k|\le 1/r}
		\hat{\bar{w}}_1(k)
		e^{-2\pi i k\cdot x}
		,\label{EQ34}
	\end{align} 
so that $w_1^{(r)} \in C^\infty(\Gamma_{1})$ and 
$w_1^{(r)} \to \bar{w}_1$ in $H^{4+\delta}$.
Since $\bar{w}_1$ is average-free, we have
$\hat{\bar{w}}_1(0) = 0$ so that
	\begin{align}
		\int_{\Gamma_1} w_1^{(r)} = 0
		.
		\label{EQ35}
	\end{align}
Next, for $\bar{v}_0$,
we use the approximation scheme in~\cite{AKOT} (see also~\cite{CS1})
adapted to our setting.
First, we consider $\phi_r\in C_0^\infty (\mathbb{R}^3)$,
a family of standard mollifiers with 
$\supp \phi_r \subseteq B(0, r/2 )$, and define
	\begin{align}
		\begin{split}
			\tilde v_0^{(r)}=\begin{pmatrix} (v_0)_1 (x_1,x_2,(x_3+r)/(1+2r)) \\
				(v_0)_2 (x_1,x_2,(x_3+r)/(1+2r)) \\
				(1+2r)^{-1}(v_0)_3 (x_1,x_2,(x_3+r)/(1+2r)) 
			\end{pmatrix}
%			, \quad
\andand
			g^{(r)}(x) =
			\phi_{r} * \tilde v_0^{(r)}
%			, \quad \text{ and } \quad v_0^{(r)} = g^{(r)} - \nabla h^{(r)},
		\end{split}
		\llabel{EQ36}
	\end{align}
with
  \begin{equation}
   v_0^{(r)} = g^{(r)} - \nabla h^{(r)}
   ,
   \llabel{EQ194}
  \end{equation}
where $h^{(r)}$ uniquely (up to a constant) solves the PDE 
	\begin{align}
		\begin{split}
			\Delta h^{(r)}  &= \div g^{(r)}
			\hspace{0.53cm} \inon{in $\Omega$},
			\\
			\partial_3 h^{(r)} &=  g_3^{(r)}-w_1^{(r)}
			\inon{on $\Gamma_1$},
			\\
			\partial_3 h^{(r)} &= g_3^{(r)}
			\hspace{1.05cm}      \inon{on $\Gamma_0$}     
		\end{split}
		\label{EQ37}
	\end{align}
with the periodic conditions in horizontal variables.
Note that \eqref{EQ35} guarantees that the Neumann problem is
solvable.
Thus we have $\tilde v_0 \in C^\infty(\mathbb{T}^2\times(-r,1+r))$,
and consequently,
$g^{(r)}$ is smooth in a neighborhood of~$\Omega$.
Letting $r\to 0$ and using \eqref{EQ19} with
$w_1^{(r)} \to \bar{w}_1$,
we conclude that $h^{(r)}$ converges to
a unique, up to a constant, solution of
\begin{align}
		\begin{split}
			\Delta h
			&= 0
			\inon{in $\Omega$}
			\\
			\partial_3 h &= 0
			\inon{on $\Gamma_0 \cup \Gamma_{1}$}
			,
		\end{split}
		\label{EQ38}
	\end{align}
	implying that $\nabla h^{(r)}\to 0$, as $r\to0$. 
	By the standard elliptic theory, $v_0^{(r)} \in C^\infty(\Omega)$ and $v_0^{(r)} \rightarrow \bar{v}_0$ in $H^{2.5+\delta}$ as $r \rightarrow 0$. 
	
Now, we are in a position to invoke Theorem~\ref{T03}.
Recalling \eqref{EQ23} and that $\Cabs>0$
is an absolute constant,
we first fix $K>\Cabs$ sufficiently large. Then,
by Theorem~\ref{T03},
we obtain
a unique solution $(v^{(r)},q^{(r)},w^{(r)})$, for $r\in (0,1)$,
to the Euler-plate system satisfying \eqref{EQ30}
on $[0,T^{(r)}]$.
	
	Now, we need a time interval, independent of $r$,
	on which our approximate solutions exist.
	To find one, we first recall that $T_0>0$, given by Theorem~\ref{T02}, only 
	depends on $\epsilon$ given in \eqref{EQ23}
	and the lower norms of the initial data, i.e., $M>0$, a fixed quantity.
	Therefore, $T_0$ does not depend on $r>0$. 
	Next, we employ Proposition~\ref{P01} and  
	conclude that $Q^{(r)}$ given by 
	\begin{align}
		Q^{(r)}(t)
		=
		(\Vert v^{(r)}\Vert_{H^{4}}
		+\Vert w^{(r)}\Vert_{H^{5.5}(\Gamma_{1})}
		+\Vert w_t^{(r)}\Vert_{H^{3.5}(\Gamma_{1})}
		+1)^2
		,
		\llabel{EQ39}   
	\end{align}
	satisfies the growth bound
	\begin{align}
		Q^{(r)}(t)
		 \le
		  CQ^{(r)}(0)e^{C_MT}
		  ,\label{EQ40}
	\end{align}
	for any $t\in [0,\min\{T_0,T^{(r)}\}]$.
	On the other hand, Theorem~\ref{T03}
	guarantees the bound
	\begin{align}
		Q^{(r)}(t)
		\le
		 K Q^{(r)}(0)
		 ,\label{EQ41}
 	\end{align}
 	so that we define the break-even time $\bar{T}_0>0$ as
 	\begin{align}
 		Ce^{C_M\bar{T}_0} = K
 		.\label{EQ42}
 	\end{align}
 	We note that this is possible since $K>\Cabs$
 	and also that $\bar{T}_0$ is independent of $r$.
 	Finally, we let $T^* = \min\{T_0,\bar{T}_0\}>0$
 	and claim that the lifespan of approximate
 	solutions may be extended to $[0,T^*]$.
 	
 	Before presenting our continuation argument
 	for solutions that have
 	$T^{(r)} < T^*$, we first briefly summarize the
 	construction scheme leading to Theorem~\ref{T03}.
 	In~\cite{KT}, the authors 
 	first solve \eqref{EQ14}--\eqref{EQ17}, i.e., the
 	Euler equations with variable coefficients
 	where $(w,w_t,a,b,\psi,J)$ are given.
 	At this stage, they assume that the initial
 	displacement $w_0$ is zero, and this is imposed 
 	for two reasons. First, when they make the ALE
 	change of variable, the zero initial displacement
 	flattens the top boundary. Second, this assumption
 	guarantees that $a$, $b$, and $J$ stay close 
 	to the identity for a sufficiently short time,
 	and this enables the absorption tricks akin to 
 	\eqref{EQ55}, \eqref{EQ58}--\eqref{EQ61}, 
 	\eqref{EQ131}--\eqref{EQ134}, and~\eqref{EQ136}.  
 	Next, the authors solve the plate equation \eqref{EQ18}
 	given the pressure term without imposing
 	that $w_0 = 0$. Finally, upon utilizing a fixed point 
 	argument, they conclude the existence part of Theorem~\ref{T03};
 	see~\cite[Section~6]{KT}.
 	
 	Now, we fix $r>0$ and assume that $T= T^{(r)} < T^*$,
 	Recalling that our solution
 	has the regularity given in \eqref{EQ30},
 	we may find a $\tau_0\in (0,T)$
 	so that
 	\begin{align}
 		(v(\tau_0),w(\tau_0),w_t(\tau_0)
 		) \in
 		 H^4 \times H^{5.5}(\Gamma_1) \times H^{3.5}(\Gamma_1)
 		 .\label{EQ43}  
 	\end{align} 	
Moreover, we can verify that this triple
satisfies the compatibility conditions \eqref{EQ19} and \eqref{EQ20}
 as in~\cite[Section~4]{KT}. Therefore,
 	 we may repeat the construction steps summarized
 	 above to get a unique solution of the Euler-plate system 
 	 with the initial datum given in~\eqref{EQ43}.
 	 We note that $w(\tau_0)$ need not be equal to zero,
 	 and this does not interfere with the strategy outlined above.  
Indeed, since we are already in the ALE variables and the top boundary
     is flat, we do not perform an additional change of coordinates
     when we solve the Euler-plate system with the initial datum~\eqref{EQ43}.
     Moreover, \eqref{EQ28} at $t=\tau_0$ and
     \eqref{EQ23} suffices for absorption-type arguments
     since $\epsilon>0$ does not depend on the data.
     Therefore,
     we obtain the existence of a unique solution
     $(\tilde{u},\tilde{w})$ on $[\tau_0,\tau_0+\tau]$,
     for some $\tau>0$.
     Utilizing the uniqueness again, we conclude
     that the solution $(u,w)=(\tilde{u},\tilde{w})$ exists on
     $[0,\max\{T,\tau_0+\tau\}]$.
     If we have $\tau_0+\tau \ge T^*$,
     we do not continue the solutions further.
     On the other hand, if $\tau_0+\tau < T^*$,
     then we either have
     \begin{align}
     	\tau_0< \tau_0+\tau \le T 
     	,\label{EQ44}
     \end{align}
     or
     \begin{align}
     	\tau_0 < T < \tau_0+\tau
     	.\label{EQ45}
     \end{align}
     When \eqref{EQ44} takes place, we can find 
     $\tau_1 \in [T-\tau/2,T]$
     so that \eqref{EQ43} is satisfied
     upon writing $\tau_1$ instead of $\tau_0$.
     Since $\tau_1<T^*$, it follows that $\eqref{EQ28}$ is valid for $t=\tau_1$,
     and since $\tau_1<T$, the initial datum $(u(\tau_1),w(\tau_1),w_t(\tau_1))$
     satisfies the bound in~\eqref{EQ41}. 
     Therefore, we may construct a unique solution to the Euler-plate system with
     this initial datum and employ uniqueness again to obtain
     the existence of $(u,w)$ on the time interval $[0,T+\tau/2]$.
     Now, we assume that \eqref{EQ45} is true and find 
     $\tau_1 \in [\tau_0+\tau/2,\tau_0+\tau]$
     such that \eqref{EQ43} holds upon replacing $\tau_0$
     with $\tau_1$.
     Note that this initial datum satisfies the bound~\eqref{EQ41}.
     To see this, we employ the exponential growth \eqref{EQ40}
     which holds as long as the solution exists and $t<T_0$.
     Recalling \eqref{EQ42} and that $T^*=\min\{T_0,\bar{T_0}\}$,
    we conclude that
    \begin{align}
    	Q(\tau_1)
    	\le
    	CQ(0)e^{C_M\tau_1}
    	 \le
    	  CQ(0)e^{C_M\bar{T}_0}
    	  =Q(0)K
    	,\llabel{EQ46}
    \end{align}
      and this guarantees the existence of the solution 
      on the time interval $[0,\tau_0+3\tau/2]$.
      If $\tau_0+3\tau/2<T^*$,
      we repeat this procedure until  
      the lifespan of the solution is extended to $[0,T^*]$.
      Therefore, we may assume that 
      for all $r>0$, the unique solution $(u^{(r)},w^{(r)})$
      exists on $[0,T^*]$ satisfying the bounds given in 
      Theorem~\ref{T02}.
      
    Now, upon passing to a subsequence $r \to 0^+$,
	there exists $(v,w,q,\eta,a)$ such that
	\begin{align}
		\begin{split}
			&v^{(r)} \to v \weaks L^{\infty}([0,T^*];H^{2.5+\delta}),
			\\&
			v_{t}^{(r)}\to v_t \weaks  L^{\infty}([0,T^*];H^{0.5+\delta}),
			\\&
			q^{(r)}\to q  \weaks \in  L^{\infty}([0,T^*];H^{1.5+\delta}),
			\\&
			w^{(r)}\to w \weaks \in  L^{\infty}([0,T^*];H^{4+\delta}(\Gamma_{1})),
			\\&
			w_{t}^{(r)}\to w_{t} \weaks L^{\infty}([0,T^*];H^{2+\delta}(\Gamma_{1})),
			\\&
			w_{tt}^{(r)} \to w_{tt} \weaks  L^{\infty}([0,T^*];H^{\delta}(\Gamma_{1})),
			\\&
			\eta^{(r)} \to \eta \weaks  L^{\infty}([0,T^*];H^{4.5+\delta}),
			\\&
			a^{(r)} \to a \weaks L^{\infty}([0,T^*];H^{3.5+\delta}),
			\\&
			a_{t}^{(r)} \to a_{t} \weaks  L^{\infty}([0,T^*];H^{1.5+\delta})
			.
		\end{split}
		\llabel{EQ47}
	\end{align}
Furthermore, employing
	the Aubin-Lions lemma yields 
	\begin{align}
		\begin{split}
			&v^{(r)}\to v \inn  C([0,T^*];H^{s})
			\\&
			a^{(r)}\to a \inn  C([0,T^*];H^{s+1})
			,
		\end{split}
		\llabel{EQ48}
	\end{align}
	for any $s < 2.5+\delta$.
	Finally, we can pass to the limit
	in the equations as in~\cite{KT},
	concluding the proof of the existence part
	of Theorem~\ref{T01}.	
\end{proof}

In this the rest of this section, 
we find
estimates required to prove Proposition~\ref{P01}.

\subsection{Pressure estimates}\label{sec.pre}
Here, we present the necessary estimates for the pressure.
From \eqref{EQ14}, it 
follows that $q$ satisfies
\begin{align}
	\begin{split}
		\partial_{j}(\tda_{ji} a_{ki}\partial_{k}q)   
		&
		=
		\partial_{j}(\partial_{t}\tda_{ji} v_i)
		-
		b_{ji} \partial_{j}(v_m a_{km}) \partial_{k} v_i
		+ b_{ji}
		\partial_{j}\left(\frac{\psi_t}{J}\right)\partial_{3}v_i
 	\\&\indeq\indeq
		+
		v_m a_{km} \partial_{k}b_{ji} \partial_{j}v_i
		- 
		\frac{\psi_t}{J}\partial_{3}b_{ji}\partial_{j}v_i
		=\fomega
		\inon{in $\Omega$}
		,\label{EQ49}
	\end{split}
\end{align}
with the boundary conditions
\begin{align}
	\begin{split}
		\tda_{3i}a_{ki}\partial_{k}q
		= 0
		\inon{on $\Gamma_0$}
		,
	\end{split}
	\llabel{EQ50}
\end{align}
and
\begin{align}
	\begin{split}
		b_{3i}a_{ki}\partial_{k}q
		+ q
		&=\Delta_2^2 w 
		-  \nu   \Delta_2   w_{t}
		+ \partial_{t}\tda_{3i}v_i
		-
		J^{-1}
		\sum_{j=1}^{2}  v_k b_{jk} \partial_{j}w_t
		+ J^{-1}w_t \partial_{3}b_{3i} v_i
		-   J^{-1}    \partial_{j}      b_{3i} v_k b_{jk} v_i
		\\&=\fgamma
		\inon{on $\Gamma_1$}
		.
	\end{split}
	\label{EQ51}
\end{align}
For the derivation of \eqref{EQ49}--\eqref{EQ51}, see~\cite{KT}.
The equations \eqref{EQ49}--\eqref{EQ51} may be rewritten as
\begin{align}
	\begin{split}
		-\Delta q
		&= \partial_j((b_{ji}a_{ki}-\delta_{jk})\partial_kq)-\fomega
		,\inon{in $\Omega$}
		\\
		\partial_3 q &= (b_{3i}a_{ki}-\delta_{3k})\partial_k q
		,\hspace{1.4cm} \inon{on $\Gamma_{0}$}
		\\
		\partial_3 q+ q &= (b_{3i}a_{ki}-\delta_{3k})\partial_k q-\fgamma
		,\hspace{0.5cm} \inon{on $\Gamma_{1}$}
		.\label{EQ52}
	\end{split}
\end{align}
The next lemma provides the pressure estimates, based on~\eqref{EQ52}

\cole
\begin{Lemma}
	\label{L.Pre}
Let $\nu \in [0,1]$ and
suppose that
	$(v,w,q)$
	is a solution of the
	Euler-plate system \eqref{EQ14}--\eqref{EQ21}
	satisfying \eqref{EQ30}
	on $[0,T]$.	
	In addition, assume that
	\eqref{EQ25}--\eqref{EQ28}
	hold on $[0,\min\{T_0,T\}]$ 
	for some $M>0$,
	where $\epsilon>0$
	is given by \eqref{EQ23}
	and $T_0>0$ is 
	given by Theorem~\ref{T02}.
	Then, with
	\begin{align}
		Q(t)
		=
		(\Vert v\Vert_{H^{4}}
		+\Vert w\Vert_{H^{5.5}(\Gamma_{1})}
		+\Vert w_t\Vert_{H^{3.5}(\Gamma_{1})}
		+1)^2
		,
   \llabel{EQ53}
	\end{align}
	we have
	\begin{align}
		\Vert q(t)\Vert_{H^{3}}^2
		\le C_M Q(t)
		\comma
		t\in [0,\min\{T_0,T\}],
		\label{EQ54}
	\end{align} 
	where $C_M>0$ depends only on $M$
	and in particular, neither on $\nu$ nor on~$Q(0)$.
	\end{Lemma}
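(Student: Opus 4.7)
The plan is to apply standard $H^{3}$ elliptic regularity to the Robin-type boundary value problem \eqref{EQ52} for $q$, treating the terms involving $b_{ji}a_{ki}-\delta_{jk}$ as small perturbations absorbed via the smallness bound \eqref{EQ28}, and bounding the explicit forcings $f_\Omega$ and $f_{\Gamma_1}$ in the appropriate Sobolev spaces by $C_M Q^{1/2}$.

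First I would invoke the standard $H^{3}$ elliptic estimate for the mixed Neumann/Robin problem
\begin{equation*}
-\Delta q = F \inon{in $\Omega$}, \quad \partial_3 q = G_0 \onon{\Gamma_0}, \quad \partial_3 q + q = G_1 \onon{\Gamma_1},
\end{equation*}
with horizontal periodicity and the compatibility $\int_{\Gamma_1} q = 0$, giving
\begin{equation*}
\Vert q\Vert_{H^3} \lec \Vert F\Vert_{H^1} + \Vert G_0\Vert_{H^{3/2}(\Gamma_0)} + \Vert G_1\Vert_{H^{3/2}(\Gamma_1)}.
\end{equation*}
Applied to \eqref{EQ52}, it then suffices to estimate each right-hand side term.

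Next I would control the interior forcing $f_\Omega$ from \eqref{EQ49}: each summand is (at most) quadratic and involves one derivative of $v$ paired with either $\partial_t b$, $\psi_t/J$, $v_t$, or derivatives of $b$. Using \eqref{EQ25}--\eqref{EQ27} for the $M$-controlled factors together with $\Vert v\Vert_{H^{4}}\le Q^{1/2}$, Sobolev product estimates in $H^{1}(\Omega)$ yield $\Vert f_\Omega\Vert_{H^1} \lec C_M Q^{1/2}$. For the boundary forcing $f_{\Gamma_1}$ in \eqref{EQ51}, the top-order terms $\Delta_2^2 w$ and $\nu\Delta_2 w_t$ give $\Vert \Delta_2^2 w\Vert_{H^{3/2}(\Gamma_1)}\lec \Vert w\Vert_{H^{5.5}(\Gamma_1)} \le Q^{1/2}$ and $\nu\Vert \Delta_2 w_t\Vert_{H^{3/2}(\Gamma_1)}\lec \Vert w_t\Vert_{H^{3.5}(\Gamma_1)}\le Q^{1/2}$, while the remaining quadratic terms in $v$, $w_t$, $b$ are handled via traces and products in $H^{3/2}(\Gamma_1)$ under the bounds of Theorem~\ref{T02}, yielding $\Vert f_{\Gamma_1}\Vert_{H^{3/2}(\Gamma_1)} \lec C_M Q^{1/2}$; the $\Gamma_0$ piece is treated analogously with no boundary data from the plate.

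The main obstacle is the absorption of the perturbation terms $(b_{ji}a_{ki}-\delta_{jk})\partial_k q$ appearing in the interior and on both boundaries. In the interior, expanding $\partial_j((b_{ji}a_{ki}-\delta_{jk})\partial_k q)$ by Leibniz, the worst piece is $(ba-I)\partial^2 q$, which a Kato--Ponce type product estimate bounds in $H^{1}$ by $\Vert ba-I\Vert_{H^{1.5+\delta}}\Vert q\Vert_{H^{3}} < \epsilon\,\Vert q\Vert_{H^3}$, while the remaining $\partial(ba)\cdot \partial q$ pieces are controlled by $C_M\Vert q\Vert_{H^2}$. On the boundaries the trace of $ba-I$ lies in $H^{1+\delta}(\Gamma_1)\hookrightarrow L^\infty$ with $\Vert ba-I\Vert_{L^\infty(\Gamma_1)}<\epsilon$, and a product estimate in $H^{3/2}(\Gamma_1)$ yields again a contribution of size $\epsilon\,\Vert q\Vert_{H^3}$ plus $C_M\Vert q\Vert_{H^2}$. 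Since $\epsilon<1/(2\Cabs)$ by \eqref{EQ23}, the $\epsilon\Vert q\Vert_{H^3}$ contributions can be absorbed into the left-hand side of the elliptic bound, while the residual $\Vert q\Vert_{H^2}$ terms are handled by interpolation between $L^{2}$ and $H^{3}$ together with \eqref{EQ27}. Squaring the resulting estimate then yields $\Vert q\Vert_{H^3}^{2}\le C_M Q$, as claimed.
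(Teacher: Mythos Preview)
Your proposal is correct and follows essentially the same route as the paper: apply the $H^{3}$ elliptic estimate to \eqref{EQ52}, bound $\Vert f_\Omega\Vert_{H^{1}}$ and $\Vert f_{\Gamma_1}\Vert_{H^{3/2}(\Gamma_1)}$ by $C_M Q^{1/2}$, and absorb the $(ba-I)\nabla q$ perturbation terms via the smallness \eqref{EQ28}. The only cosmetic difference is that the paper bounds the lower-order residual of the perturbation directly by $C_M$ using the a~priori bound $\Vert q\Vert_{H^{1.5+\delta}}\le C_0 M$ from \eqref{EQ27}, whereas you first produce a $C_M\Vert q\Vert_{H^{2}}$ term and then interpolate and absorb; both close the estimate.
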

\colb

\begin{proof}[Proof of Lemma~\ref{L.Pre}]
We apply the $H^3$ elliptic estimates (see~\cite{LM}) to the problem~\eqref{EQ52}
obtaining
	\begin{align}
  	\begin{split}
		\Vert q\Vert_{H^{3}}
		&\le
		C\bigl(\Vert \partial_j((b_{ji}a_{ki}-\delta_{jk})\partial_kq)\Vert_{H^{1}}
		+\Vert (b_{3i}a_{ki}-\delta_{3k})\partial_k q\Vert_{H^{1.5}(\Gamma_{0} \cup \Gamma_{1})}
     \\&\indeq\indeq\indeq\indeq\indeq
		+\Vert \fomega\Vert_{H^{1}}
		+\Vert \fgamma\Vert_{H^{1.5}(\Gamma_{1})}
		\bigr).
  \end{split}
		\label{EQ55}
	\end{align}
For the first term on the right-hand side of \eqref{EQ55},
	we write
	\begin{align}
		\Vert (ba-I)\nabla q\Vert_{H^{2}}
		 \le
		  \Vert (b-I)a\nabla q\Vert_{H^{2}}+\Vert (a-I)\nabla q\Vert_{H^{2}}
		  .\label{EQ56}
		  \end{align}
	Now, we recall the Sobolev product inequality
	\begin{align}
		\Vert fg\Vert_{W^{s,p}}
		\le
		C(\Vert f\Vert_{W^{s,p_1}}
		\Vert g\Vert_{L^{p_2}}
		+
		\Vert f\Vert_{L^{q_1}}
		\Vert g\Vert_{W^{s,q_2}}
		),
		\llabel{EQ57}
	\end{align}
	where $s\ge 0$, $p_i, q_i \in [1,\infty]$ and $p \in (1,\infty)$ are such that 
	$\frac{1}{p_1} + \frac{1}{p_2} = \frac{1}{p} = \frac{1}{q_1} + \frac{1}{q_2}$.	  
	Then using this inequality, as well as \eqref{EQ28}, we obtain 
	\begin{align}
		\begin{split}
		\Vert (b-I)a\nabla q\Vert_{H^{2}}
		 &\le
		  C(\Vert b-I\Vert_{H^{1.5+\delta}}\Vert a\nabla q\Vert_{H^{2}}
		   +\Vert b-I\Vert_{H^{3}}\Vert a\nabla q\Vert_{H^{0.5}}
		  )  \\&\le
		     \epsilon C(\Vert (a-I)\nabla q\Vert_{H^{2}}+\Vert \nabla q\Vert_{H^{2}})
		      +C_M
		       \le 
		        \epsilon(\epsilon+1)C\Vert \nabla q\Vert_{H^{2}}+C_M
	,\label{EQ58}
	\end{split}
	\end{align}
and 
	\begin{align}
		\Vert (a-I)\nabla q\Vert_{H^{2}}
		 \le 
		  C(\Vert a-I\Vert_{H^{1.5+\delta}}\Vert \nabla q\Vert_{H^{2}}
		   +\Vert a-I\Vert_{H^{3}}\Vert \nabla q\Vert_{H^{0.5}}
		  )  \le 
		     \epsilon C\Vert \nabla q\Vert_{H^{2}} + C_M
		     .\label{EQ59}
	\end{align}
	Combining \eqref{EQ56}--\eqref{EQ59}, we arrive at
	\begin{align}
		\Vert (ba-I)\nabla q\Vert_{H^{2}}
		 \le
		  C\epsilon\Vert \nabla q\Vert_{H^{2}}+C_M
		  ,\llabel{EQ60}
	\end{align}
	and employing the same approach, we conclude that
	\begin{align}
		\Vert (b_{3i}a_{ki}-\delta_{3k})\partial_k q\Vert_{H^{1.5}(\Gamma_{0} \cup \Gamma_{1})}
		 \le
		  \epsilon C\Vert \nabla q\Vert_{H^{2}} +C_M
	.\label{EQ61}
	\end{align}
	Next, we estimate $\fomega$ by writing
	\begin{align}
		\Vert \fomega\Vert_{H^{1}}
		\le
		C_M(\Vert \partial_t b\Vert_{H^{2}}\Vert v\Vert_{H^{2}}
		+\Vert b\Vert_{H^{1.5+\delta}}\Vert v\Vert_{H^{2.5+\delta}}^2
		(\Vert a\Vert_{H^{2.5+\delta}}
		+\Vert J\Vert_{H^{2.5+\delta}}+\Vert \psi_t\Vert_{H^{2.5+\delta}})
		)\le C_M Q^\frac{1}{2}
		.\llabel{EQ62}
	\end{align}
	We note in passing that, 
	to bound the term involving $J^{-1}$, we have used \eqref{EQ28}
	so that
	\begin{align}
		\frac{1}{2} \le J \le \frac{3}{2}.
		\llabel{EQ63}
	\end{align}
Now, it only remains to treat the boundary data $\fgamma$,
	for which we write
	\begin{align}
		\begin{split}
			\Vert \fgamma\Vert_{H^{1.5}(\Gamma_{1})}
			\le&
			C_M(\Vert w\Vert_{H^{5.5}(\Gamma_{1})}
			+\nu\Vert w_t\Vert_{H^{3.5}(\Gamma_{1})}
			+\Vert \partial_t b\Vert_{H^{2}}\Vert v\Vert_{H^{2.5+\delta}}
			\\&+\Vert J\Vert_{H^{2}}\Vert v\Vert_{H^{2}}\Vert b\Vert_{H^{2.5+\delta}}
			(\Vert w_t\Vert_{H^{2.5}(\Gamma_{1})}+\Vert v\Vert_{H^{2}}\Vert b\Vert_{H^{2.5+\delta}})
			)\le C_M Q^\frac{1}{2}.\label{EQ64}
		\end{split}
	\end{align}
	Lastly, 
	we combine \eqref{EQ55}--\eqref{EQ64}
	and use \eqref{EQ23}
	so that we may absorb $\epsilon \Vert q\Vert_{H^{3}}$, establishing~\eqref{EQ54}.	
\end{proof}

\subsection{Tangential estimates}\label{sec.tan}

In this section, we set $\Lambda = (I-~\Delta_\hh)^{1/2}$ and
establish the tangential estimates.

\cole
\begin{Lemma}
\label{L.Tan}
	Let $\nu \in [0,1]$ and 
assume that	$(v,w,q)$
	is a solution of the
	Euler-plate system \eqref{EQ14}--\eqref{EQ21}
	satisfying \eqref{EQ30}
	on $[0,T]$.
	In addition, assume that
	\eqref{EQ25}--\eqref{EQ28}
	hold on $[0,\min\{T_0,T\}]$ 
	for some $M>0$,
	where $\epsilon>0$
	is given by \eqref{EQ23}
	and $T_0>0$ is 
	given by Theorem~\ref{T02}.
	Then, for 
	\begin{align}
		Q(t)
		=
		(\Vert v\Vert_{H^{4}}
		+\Vert w\Vert_{H^{5.5}(\Gamma_{1})}
		+\Vert w_t\Vert_{H^{3.5}(\Gamma_{1})}
		+1)^2
		,
   \llabel{EQ66}
	\end{align}
	we have
	\begin{align}
		\begin{split}
			&
			\Vert  w(t)\Vert_{H^{5.5}(\Gamma_1)}^2
			+       \Vert w_{t}(t)\Vert_{H^{3.5}(\Gamma_1)}^2
			+    \nu \int_{0}^{t}   \Vert  w_{t} \Vert_{H^{4.5}(\Gamma_1)}^2 \, ds
			\\&\indeq
			\le
			C\epsilon \Vert v\Vert_{H^{4}}
			+    \Vert v(t)\Vert_{L^2}^{1/4}
			\Vert v(t)\Vert_{H^{4}}^{7/4}
			+
			Q(0)+C_M\int_{0}^{t} Q(s)\,ds
			,\label{EQ65}	
		\end{split}
		\end{align}
	for $t\in [0,\min\{T_0,T\}]$, 
	where $C>0$ is an absolute constant and $C_M>0$ only depends on $M$
	and in particular, neither on $\nu$ nor on $Q(0)$.
	\end{Lemma}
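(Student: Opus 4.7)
The plan is to apply the horizontal operator $\Lambda^{3.5}$ to the plate equation \eqref{EQ18}, test against $\Lambda^{3.5}w_t$, and integrate on $\Gamma_1\times[0,t]$, which produces the basic energy identity
\begin{equation*}
\tfrac{1}{2}\bigl(\|\Lambda^{3.5}w_t(t)\|_{L^2}^2+\|\Lambda^{3.5}\Delta_\hh w(t)\|_{L^2}^2\bigr)+\nu\int_0^t\|\Lambda^{3.5}\nabla_\hh w_t\|_{L^2}^2 = (\text{initial}) + \int_0^t\int_{\Gamma_1}\Lambda^{3.5}q\,\Lambda^{3.5}w_t.
\end{equation*}
Using \eqref{EQ21} and Poincar\'e on the flat torus $\Gamma_1$, the left-hand side controls $\|w_t(t)\|_{H^{3.5}}^2$, $\|w(t)\|_{H^{5.5}}^2$, and $\nu\int_0^t\|w_t\|_{H^{4.5}}^2$, precisely the quantities appearing in \eqref{EQ65}.

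The central step is to handle the boundary coupling. By the matching condition \eqref{EQ17}, $\Lambda^{3.5}w_t=b_{3i}\Lambda^{3.5}v_i+[\Lambda^{3.5},b_{3i}]v_i$ on $\Gamma_1$. The vector field $bv$ is divergence-free in $\Omega$ by the Piola identity \eqref{EQ12} combined with the ALE incompressibility $a_{ki}\partial_kv_i=0$, and $(bv)_3$ vanishes on $\Gamma_0$ since $\psi|_{\Gamma_0}=0$ forces $\partial_h\psi|_{\Gamma_0}=0$ while $v_3|_{\Gamma_0}=0$. Integration by parts therefore lifts the main boundary integral into the bulk,
\begin{equation*}
\int_{\Gamma_1}\Lambda^{3.5}q\cdot b_{3i}\Lambda^{3.5}v_i = \int_\Omega b_{ij}\Lambda^{3.5}\partial_iq\cdot\Lambda^{3.5}v_j+\int_\Omega\Lambda^{3.5}q\cdot b_{ij}\partial_i\Lambda^{3.5}v_j,
\end{equation*}
and the second term is a controllable commutator since $b_{ij}\partial_iv_j=0$. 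Substituting $b_{ij}\partial_iq=Ja_{ij}\partial_iq=-J\partial_tv_j-Jv_ka_{lk}\partial_lv_j+\psi_t\partial_3v_j$ from the Euler equation \eqref{EQ14}, the leading $\partial_tv$-contribution yields, after pulling $\Lambda^{3.5}$ through $J$, a term $-\tfrac12\tfrac{d}{dt}\int_\Omega J|\Lambda^{3.5}v|^2+\tfrac12\int_\Omega\partial_tJ\,|\Lambda^{3.5}v|^2$. We move the time derivative to the left-hand side, augmenting the energy by $\int_\Omega J|\Lambda^{3.5}v|^2$, which by \eqref{EQ28} is comparable to a horizontal $H^{3.5}(\Omega)$ norm of~$v$.

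Integrating in time and transferring $\int_\Omega J(t)|\Lambda^{3.5}v(t)|^2$ back to the right, we bound it by the interpolation $\|v(t)\|_{H^{3.5}}^2\lec\|v(t)\|_{L^2}^{1/4}\|v(t)\|_{H^4}^{7/4}$, giving the sub-quadratic term in \eqref{EQ65}; the analogous initial contribution is absorbed into $Q(0)$. Every remaining term -- the commutators $[\Lambda^{3.5},b_{ij}]\partial_iq$, $[\Lambda^{3.5},J]\partial_tv_j$, $[\Lambda^{3.5},b_{3i}]v_i$, the Euler convection and $\psi_t$-terms, $\partial_tJ|\Lambda^{3.5}v|^2$, and the $\Gamma_1$-commutator from the matching identity -- is bounded by $C_MQ(s)$ via Kato--Ponce product estimates, the a~priori bounds \eqref{EQ25}--\eqref{EQ27}, and the pressure bound $\|q\|_{H^3}\le C_MQ^{1/2}$ from Lemma~\ref{L.Pre}. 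Pieces carrying a factor $b-I$, $a-I$, or $J-I$ supply the $C\epsilon\|v\|_{H^4}$-type contribution through the smallness \eqref{EQ28}.

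I expect the main obstacle to be the top-order commutator $[\Lambda^{3.5},b_{ij}]\partial_iq$ and the convective analog $\Lambda^{3.5}(Jv_ka_{lk}\partial_lv_j)$ paired with $\Lambda^{3.5}v_j$. Both live right at the critical regularity: the $3.5$ tangential derivatives must be distributed between a coefficient in $H^{3.5+\delta}$ (via \eqref{EQ25}) and a factor in $H^3$ (via Lemma~\ref{L.Pre}) without any loss, and any genuinely quadratic remainder in $\Lambda^{3.5}v$ must be identified either with the augmented energy already on the left or with the $\epsilon$-absorbable piece on the right, rather than with the uncontrolled quantity $\|v\|_{H^4}^2$.
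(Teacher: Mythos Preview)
Your overall strategy matches the paper's: test the plate equation at the top tangential order, cancel the resulting boundary pressure term against what comes from testing the ($J$-multiplied) velocity equation, and absorb the leftover velocity kinetic energy $\int_\Omega J|\Lambda^{3.5}v|^2$ via the interpolation $\|v\|_{H^{3.5}}^2\lec\|v\|_{L^2}^{1/4}\|v\|_{H^4}^{7/4}$ together with the $(J-I)$ smallness giving the $C\epsilon\|v\|_{H^4}$ piece. The identification of these two right-hand side terms is exactly right.

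However, you have glossed over a regularity issue that the paper treats as essential. At the regularity \eqref{EQ30} one has $w_{tt}\in L^\infty_T H^{1.5}(\Gamma_1)$ and $q|_{\Gamma_1}\in L^\infty_T H^{2.5}(\Gamma_1)$, so neither the pairing $\int_{\Gamma_1}\Lambda^{3.5}w_{tt}\,\Lambda^{3.5}w_t$ nor $\int_{\Gamma_1}\Lambda^{3.5}q\,\Lambda^{3.5}w_t$ is well-defined; the paper explicitly points this out before the proof. Its remedy is to replace one tangential derivative by a difference quotient $D=D_{h,l}$ everywhere: apply $D\Lambda^{2.5}$ to the plate equation and test with $D\Lambda^{2.5}w_t$ (now an $H^{-1}$--$H^{1}$ pairing on $\Gamma_1$), while on the velocity side apply $D\Lambda^{2}$ to $J\cdot$\eqref{EQ14} and test with $D\Lambda^{3}v_i$. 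This \emph{asymmetric} $D\Lambda^2$/$D\Lambda^3$ splitting on the velocity equation is deliberate: all commutators involving $\partial_t v$ then land on $\Lambda D\partial_t v$, which is in $L^2$ since $v_t\in H^2$, whereas your symmetric $[\Lambda^{3.5},J]\partial_t v$ would require $\partial_t v\in H^{2.5}$ tangentially. The boundary term produced is $\int_{\Gamma_1}D\Lambda^{2.5}q\,D\Lambda^{2.5}(b_{3i}v_i)$, which still cancels the plate-side pressure integral via \eqref{EQ17}, and after integrating in time one sends $h\to 0^+$ to recover \eqref{EQ65}. Your formal computation is correct, but the difference-quotient regularization and the asymmetric testing are not bookkeeping --- without them the energy identity cannot be written down at the available regularity.
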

\colb

Recalling \eqref{EQ14}, \eqref{EQ18}, and
\eqref{EQ30}, we do not have sufficient 
regularity to establish \eqref{EQ65} directly.
As an example, consider $\Lambda^{3.5}w_{tt}$
and $\Lambda^{3.5}w_{t}$ which belong to
$H^{-2}(\Gamma_{1})$ and $L^2$, respectively.
Thus, we will employ difference quotients
 \begin{align}
	\begin{split}
		Df(x) = D_{h,l}f(x)= \frac{1}{h}(f(x+he_l)-f(x))
		\comma l=1,2
	\end{split}
	\label{EQ67}
\end{align}
and
\begin{align}
	\begin{split}
		\tau f(x)
               =\tau_{h,l} f(x)
                = f(x+he_l)
		\comma l=1,2
		,
	\end{split}
	\label{EQ68}
\end{align}
for
$h \in (0,1]$
and $x\in\Omega$.
Occasionally, we shall use the product rule
\begin{align}
	D(fg) = Df \tau g + fDg
	.
	\llabel{EQ69}
\end{align}
We shall frequently utilize the inequality
\begin{align}
   \| D f \|_s \le C\| \nabla_\hh f \|_s,\qquad s\geq 0
   ,
   \llabel{EQ70}	
\end{align}
where $\nabla_{\hh}$ denotes the horizontal gradient.
to estimate the terms with sufficient regularity.
Finally, we note that 
the pressure estimate \eqref{EQ54}
shall be employed 
without mention.

\begin{proof}[Proof of Lemma~\ref{L.Tan}]   
We only present our estimates for the highest order case,
i.e., $5.5$ derivatives for $w$ and so on.	
When we apply $D\Lambda^{2.5}$ to \eqref{EQ18},
all the terms in the resulting equation belong to $H^{-1}(\Gamma_1)$.
Consequently, we may test it with $D\Lambda^{2.5}w_t \in H^1(\Gamma_{1})$,
obtaining
\begin{align}
	\frac{1}{2}\frac{d}{dt}
	 \left(\Vert D\Lambda^{2.5}w_t\Vert_{L^{2}(\Gamma_{1})}^2
	  +\Vert D\Delta_\hh\Lambda^{2.5}w\Vert_{L^{2}(\Gamma_{1})}^2
	  \right)
	   +\nu 
	    \Vert D\nabla_\hh\Lambda^{2.5}\Vert_{L^{2}(\Gamma_{1})}^2
	    =
	     \int_{\Gamma_1} D\Lambda^{2.5}q D\Lambda^{2.5}w_{t} 
	.
	\label{EQ71}
\end{align}
The solutions do not have enough regularity
to estimate the term involving the pressure.
However, we may use \eqref{EQ14} and \eqref{EQ17}
to cancel this term.
To achieve this, we multiply the velocity equation
by $J$, apply
$D\Lambda^{2}$, and test
with~$D\Lambda^{3}v_i$.
The term involving pressure then reads
\begin{align}
	I_q = \int D\Lambda^{2}(b_{ki}\partial_kq)D\Lambda^{3}v_i
	,\label{EQ72}
\end{align}
which after integration by parts in $x_k$ and commuting with $b$ into $D\Lambda^{3}v_i$
results in the boundary term
\begin{align}
	\int_{\Gamma_{1}} D\Lambda^{2.5}qD\Lambda^{2.5}(b_{3i}v_i)
	.\label{EQ73}
\end{align}
Finally, employing the boundary condition \eqref{EQ17} gives us
the desired term that we need to cancel the right-hand side of~\eqref{EQ71}.

In the rest of this section,
we present the rest of the details.
First, we write
\begin{align}
	I_q 
	+
	\int
	 D\Lambda^{2}
	  \left(
	   J\partial_tv_i
	    + v_l b_{jl} \partial_j v_i
	       -\psi_t\partial_3v_i
	   \right)
	    D\Lambda^{3}v_i
	    = 0
	    .
	    \label{EQ74} 
\end{align}
Using the symmetry of $\Lambda$
and the Piola identity, we write
  \begin{align}
	I_q
	 = 
	  \int \partial_k
	   D\Lambda^{2.5}(b_{ki}q)D\Lambda^{2.5}v_i
	   ,\llabel{EQ75}
  \end{align}
from where we integrate by parts and obtain
\begin{align}
	I_q
	 = 
	  -\int 
	    D\Lambda^{2.5}(b_{ki}q)\partial_kD\Lambda^{2.5}v_i
	  +
	   \int_{\Gamma_{1}} 
	    D\Lambda^{2.5}(b_{3i}q)D\Lambda^{2.5}v_i
	  -
	   \int_{\Gamma_{0}}
	    D\Lambda^{2.5}(b_{3i}q)D\Lambda^{2.5}v_i
	  .\label{EQ76} 
\end{align}
The integral over the bottom boundary vanishes thanks
to \eqref{EQ03}$_{3}$ and \eqref{EQ16}, while
for the integral on the top boundary, we write
\begin{align}
	D\Lambda^{2.5}(b_{3i}q)D\Lambda^{2.5}v_i
	 = 
	  D\Lambda^{2.5}qD\Lambda^{2.5}(b_{3i}v_i)
	  + \mathcal{R}_q
	 ,\llabel{EQ77}    
\end{align}
where $\mathcal{R}_q$ is given by
\begin{align}
	\begin{split}
	\mathcal{R}_q
	 &=
	  -\Lambda^{2.5}Dq\Lambda^{2.5}(Db_{3i}v_i)
	  +(\tau b_{3i}\Lambda^{2.5}Dv_i - \Lambda^{2.5}(\tau b_{3i}Dv_i))\Lambda^{2.5}Dq
	  \\&\indeq
	  + (\Lambda^{2.5}(\tau b_{3i}Dq)-\tau b_{3i}\Lambda^{2.5}Dq)\Lambda^{2.5}Dv_i
	  + \Lambda^{2.5}(Db_{3i}q)\Lambda^{2.5}Dv_i
       \\&
	  = \mathcal{R}_{q,1}
	   +\mathcal{R}_{q,2}
	    +\mathcal{R}_{q,3}
	    +\mathcal{R}_{q,4}
	  .\llabel{EQ78}
	  \end{split}
\end{align}
Denoting by $I_{q,\Omega}$ the first integral
on the right-hand side of \eqref{EQ76},
$I_q$ becomes
\begin{align}
	I_q
	 =
	  \int_{\Gamma_{1}} D\Lambda^{2.5}qD\Lambda^{2.5}(b_{3i}v_i)
	   +I_{q,\Omega}
	    +\sum_{l=1}^{4}\int_{\Gamma_{1}} \mathcal{R}_{q,l}
	    .
	    \llabel{EQ79}
\end{align}
Substituting this in~\eqref{EQ74}, summing with $\eqref{EQ71}$,
and employing the boundary condition \eqref{EQ17} yield
\begin{align}
  \begin{split}
    &
	\frac{1}{2}\frac{d}{dt}
	\left(\Vert D\Lambda^{2.5}w_t\Vert_{L^{2}(\Gamma_{1})}^2
	+\Vert D\Delta_\hh\Lambda^{2.5}w\Vert_{L^{2}(\Gamma_{1})}^2
	\right)
	+\nu 
	\Vert D\nabla_\hh\Lambda^{2.5}\Vert_{L^{2}(\Gamma_{1})}^2
      \\&\indeq
	=
	-	\int
	 D\Lambda^{2}
	  \left(
	   J\partial_tv_i
	    + v_l b_{jl} \partial_j v_i
	       -\psi_t\partial_3v_i
	   \right)
	    D\Lambda^{3}v_i
	+I_{q,\Omega}
	+\sum_{l=1}^{4}\int_{\Gamma_{1}} \mathcal{R}_{q,l}
      .
  \end{split}
	\label{EQ80}
\end{align}
Now, it only remains to estimate the terms on the right-hand
side of~\eqref{EQ80}. We start by writing
\begin{align}
	\begin{split}
	\int_{\Gamma_{1}} \mathcal{R}_{q,1}
	 &=
	  \int_{\Gamma_{1}} \Lambda^{1.5}Dq \Lambda^{3.5}(Db_{3i}v_i)
	   \le
	    \Vert \Lambda^{1.5}Dq\Vert_{L^{2}(\Gamma_{1})}
	     \Vert \Lambda^{3.5}(Db_{3i}v_i)\Vert_{L^{2}(\Gamma_{1})}
	      \\&\le
		    C\Vert q\Vert_{H^{3}}	      
	     (\Vert b\Vert_{H^{5}}\Vert v\Vert_{H^{1.5+\delta}}+\Vert b\Vert_{H^{2.5+\delta}}\Vert v\Vert_{H^{4}})
	     \le C_MQ,
	  \label{EQ81}
\end{split}
\end{align}
using the trace and product inequalities.
Next, for the term involving $\mathcal{R}_{q,4}$ we have
\begin{align}
	\begin{split}
	\int_{\Gamma_{1}}\mathcal{R}_{q,4}
	 &\le
	  \Vert \Lambda^{2.5}(Db_{3i}q)\Vert_{L^{2}(\Gamma_{1})}
	   \Vert \Lambda^{2.5}Dv_i\Vert_{L^{2}(\Gamma_{1})}
	   \le
	    C\Vert v\Vert_{H^{4}}(\Vert b\Vert_{H^{4}}\Vert q\Vert_{H^{1.5+\delta}}
	     +\Vert b\Vert_{H^{2.5+\delta}}\Vert q\Vert_{H^{3}})
	      \\&\le C_MQ
	      ,\llabel{EQ82}
	    \end{split}
\end{align}
leaving us with the commutator terms $\mathcal{R}_{q,2}$ and $\mathcal{R}_{q,3}$.
To estimate $\mathcal{R}_{q,3}$ we write
\begin{align}
	\begin{split}
	\int_{\Gamma_{1}}\mathcal{R}_{q,3}
	 &\le
	  \Vert \Lambda^{2.5}(\tau b_{3i}Dq)-\tau b_{3i}\Lambda^{2.5}Dq\Vert_{L^{2}(\Gamma_{1})}
	   \Vert \Lambda^{2.5}Dv_i\Vert_{L^{2}(\Gamma_{1})}
	    \\ &\le
	     C\Vert v\Vert_{H^{4}}
	      (\Vert b\Vert_{H^{4.5+\delta}}\Vert q\Vert_{H^{1.5}}
	       +\Vert b\Vert_{H^{2.5+\delta}}\Vert q\Vert_{H^{3}})
	       \le C_MQ
	       ,\label{EQ83}
	       \end{split}
\end{align}
while to bound $\mathcal{R}_{q,2}$ we first note that
\begin{align}
	\int_{\Gamma_{1}}\mathcal{R}_{q,2}
	 =
	  \int_{\Gamma_{1}}
	   \Lambda^{1.5}Dq
	    \left(\tau b_{3i}\Lambda^{3.5}Dv_i
	     -\Lambda^{3.5}(\tau b_{3i}Dv_i)\right)
	      +\int_{\Gamma_{1}}
	       \Lambda^{1.5}Dq
	        \left(\Lambda(\tau b_{3i}\Lambda^{2.5}Dv_i)
	         -\tau b_{3i}\Lambda^{3.5} Dv_i\right)
	      .\llabel{EQ84}
\end{align}
from where we obtain
\begin{align}
	\begin{split}
	\int_{\Gamma_{1}}\mathcal{R}_{q,2}
	 &\le
	  \Vert \Lambda^{1.5}Dq\Vert_{L^{2}(\Gamma_{1})}
	   \bigl(
	    \Vert \tau b_{3i}\Lambda^{3.5}Dv_i-\Lambda^{3.5}(\tau b_{3i}Dv_i)\Vert_{L^{2}(\Gamma_{1})}
          \\&\indeq\indeq\indeq\indeq\indeq\indeq\indeq\indeq\indeq\indeq
	     +
	      \Vert \Lambda(\tau b_{3i}\Lambda^{2.5}Dv_i)-\tau b_{3i}\Lambda^{3.5} Dv_i\Vert_{L^{2}(\Gamma_{1})}
	       \bigr)
	        \\ &\le
	         C\Vert q\Vert_{H^{3}}
	          (
	           \Vert v\Vert_{H^{4}}\Vert b\Vert_{H^{2.5+\delta}}
	            +
	             \Vert v\Vert_{H^{2.5+\delta}}\Vert b\Vert_{H^{4}}
	              )
	               \le C_MQ
	              .\label{EQ85}
	              \end{split}
\end{align}	

Next, we estimate $I_{q,\Omega}$. 
To do so, we commute $b_{ki}$ into the term involving $v$ and
utilize the divergence-free condition.
First, we note that
\begin{align}
	I_{q,\Omega}
	 = 
	  -\int 
	   D\Lambda^{2}(b_{ki}q)\partial_kD\Lambda^{3}v_i
	  .\llabel{EQ86}	   
\end{align}
We rewrite the integrand as
\begin{align}
	D\Lambda^{2}(b_{ki}q)\partial_kD\Lambda^{3}v_i
	 =
	  \Lambda^{2}Dq\Lambda^{3}(\tau b_{ki} D\partial_kv_i)
	   + \mathcal{R}_{q,\Omega}
	 =
	   -\Lambda^{2}Dq\Lambda^{3}(D b_{ki} \partial_kv_i)
	    + \mathcal{R}_{q,\Omega}
	    ,\llabel{EQ87}
\end{align}
where we have used the divergence-free condition
and the product rule in the second
equality and $\mathcal{R}_{q,\Omega}$ is given by
\begin{align}
	\begin{split}
		\mathcal{R}_{q,\Omega}
		 &=
	     \Lambda^{2}(Db_{ki}q)\Lambda^{3}D\partial_kv_i
	+(\Lambda^{2}(\tau b_{ki}Dq) - \tau b_{ki} \Lambda^{2}Dq)\Lambda^{3}D\partial_kv_i
        \\&\indeq
	+\Lambda^{2}Dq(\tau b_{ki}\Lambda^{3}D\partial_kv_i-\Lambda^{3}(\tau b_{ki} D\partial_kv_i))
       \\&
	=
	 \mathcal{R}_{q,\Omega,1}
	  +\mathcal{R}_{q,\Omega,2}
	   +\mathcal{R}_{q,\Omega,3}
	.\llabel{EQ88}	  
	\end{split}
	\end{align}
Then $I_{q,\Omega}$ becomes
\begin{align}
	I_{q,\Omega}
	 = 
	  I'_{q,\Omega}
	   +\sum_{l=1}^{3}
	    \int \mathcal{R}_{q,\Omega,l}
	    .\llabel{EQ89}
\end{align}
We start by estimating
\begin{align}
  \begin{split}
	I'_{q,\Omega}
	 &\le
	  \Vert \Lambda^{2}Dq\Vert_{L^{2}}\Vert \Lambda^{3}(D b_{ki} \partial_kv_i)\Vert_{L^{2}}
      \\&
	   \le 
	    C\Vert q\Vert_{H^{3}}
	     (\Vert b\Vert_{H^{4}}\Vert v\Vert_{H^{2.5+\delta}}
	      +\Vert b\Vert_{H^{2.5+\delta}}\Vert v\Vert_{H^{4}})
	       \le C_MQ
	       ,
  \end{split}
	       \label{EQ90}
\end{align}
Next, we rewrite the term involving $\mathcal{R}_{q,\Omega,1}$ as
\begin{align}
	\int \mathcal{R}_{q,\Omega,1}
	 =
	  \int   \Lambda^{3}(Db_{ki}q)\Lambda^{2}D\partial_kv_i
	   ,\llabel{EQ91}
\end{align}
from where it follows that
\begin{align}
  \begin{split}
	\int \mathcal{R}_{q,\Omega,l}
	 &\le
	  \Vert \Lambda^{3}(Db_{ki}q)\Vert_{L^{2}}\Vert \Lambda^{2}D\partial_kv_i\Vert_{L^{2}}
    \\&
 	   \le
	    C\Vert v\Vert_{H^{4}}
	     (\Vert b\Vert_{H^{4}}\Vert q\Vert_{H^{1.5+\delta}}
	      +\Vert b\Vert_{H^{2.5+\delta}}\Vert q\Vert_{H^{3}})
	       \le C_MQ
	       .
  \end{split}
	       \label{EQ92}  
\end{align}
Now, we directly estimate $\mathcal{R}_{q,\Omega,3}$ as
\begin{align}
	\begin{split}
	\int \mathcal{R}_{q,\Omega,3}
	 &\le
	  \Vert \Lambda^{2}Dq\Vert_{L^{2}}
	   \Vert \tau b_{ki}\Lambda^{3}D\partial_kv_i-\Lambda^{3}(\tau b_{ki} D\partial_kv_i)\Vert_{L^{2}}
	    \\&\le
	     C\Vert q\Vert_{H^{3}}
	      (\Vert b\Vert_{H^{4}}\Vert v\Vert_{H^{2.5}}
	       +\Vert b\Vert_{H^{2.5+\delta}}\Vert v\Vert_{H^{4}})
	        \le C_MQ
	        ,\label{EQ93}
	        \end{split}
\end{align}
while to treat $\mathcal{R}_{q,\Omega,2}$ we expand $\Lambda^{2}$
as $I-\sum_{l=1}^2 \partial_{mm}$, and integrate by parts in~$m$.
This yields
\begin{align}
	 \begin{split}
	\int \mathcal{R}_{q,\Omega,2}
	 =&
	 \sum_{m=1}^2
	 \left(
	  \int
	   (\Lambda^{2}(\partial_m\tau b_{ki}Dq) - \partial_m\tau b_{ki} \Lambda^{2}Dq)\partial_m\Lambda D\partial_kv_i
	  \right)\\&
	  +\sum_{m=1}^2
	  \left(
	  \int
	  (\Lambda^{2}(\tau b_{ki}\partial_mDq) - \tau b_{ki} \Lambda^{2}\partial_mDq)\partial_m\Lambda D\partial_kv_i
	  \right)\\&+\int
	  (\Lambda^{2}(\tau b_{ki}Dq) - \tau b_{ki} \Lambda^{2}Dq)\Lambda D\partial_kv_i
	   .\label{EQ94}
	  \end{split}
\end{align}
For each integral on the right-hand side of \eqref{EQ94},
we employ the Cauchy-Schwarz inequality, and this results in
\begin{align}
	\int \mathcal{R}_{q,\Omega,2}
	 \le
	 C\Vert v\Vert_{H^{4}}(\Vert b\Vert_{H^{3.5+\delta}}\Vert q\Vert_{H^{2}}+
	  \Vert b\Vert_{H^{4}}\Vert q\Vert_{H^{1.5}}+
	 \Vert b\Vert_{H^{2.5+\delta}}\Vert q\Vert_{H^{3}})
	 \le C_MQ,\label{EQ95}
\end{align}
which concludes the treatment of~$I_{q,\Omega}$.

Going back to \eqref{EQ80}, it only remains to estimate
the first integral on the right-hand side, which
we denote by~$I$ and rewrite as
\begin{align}
	\begin{split}
	I
	 &=
	  \int
	   D\Lambda^{2}
	  (J\partial_tv_i
	  )D\Lambda^{3}v_i
	  +
	   \int
	    D\Lambda^{2}
	     \left(
	     v_l b_{jl} \partial_j v_i
	     \right)D\Lambda^{3}v_i
	   -
	    \int
	    D\Lambda^{2}
	    (\psi_t\partial_3v_i)
	    D\Lambda^{3}v_i
	  \\& = I_1 + I_2 + I_3
	   .\llabel{EQ96}
	   \end{split}
\end{align}
We start with $I_2$ and write
\begin{align}
	I_2 
	 \le
	  C\Vert v\Vert_{H^{4}}
	   (\Vert v\Vert_{H^{3}}\Vert b\Vert_{H^{1.5+\delta}}\Vert v\Vert_{H^{2.5+\delta}}
	     +\Vert v\Vert_{H^{1.5+\delta}}\Vert b\Vert_{H^{3}}\Vert v\Vert_{H^{4}})
	      \le C_MQ,
	      \label{EQ97}
\end{align}
while $I_3$ is bounded as 
\begin{align}
	\begin{split}
	I_3
	 &\le
	  C\Vert v\Vert_{H^{4}}
	   (\Vert \psi_t\Vert_{H^{3}}\Vert v\Vert_{H^{2.5+\delta}}
	     +\Vert \psi_t\Vert_{H^{1.5+\delta}}\Vert v\Vert_{H^{4}})
	  \\&\le
	       C\Vert v\Vert_{H^{4}}
	       (\Vert w_t\Vert_{H^{2.5}(\Gamma_{1})}\Vert v\Vert_{H^{2.5+\delta}}
	       +\Vert w_t\Vert_{H^{1+\delta}(\Gamma_{1})}\Vert v\Vert_{H^{4}})
	       \le C_MQ,
	       \label{EQ98}
	       \end{split}
	       \end{align}
 Next,
to reduce the number of derivatives on $v_t$
we rewrite the integrand in $I_1$ as
 \begin{align}
 	\begin{split}
 	D\Lambda^{2}(J\partial_tv_i)D\Lambda^{3}v_i
 	 &=
 	   JD\Lambda^2\partial_t v_i D\Lambda^{3}v_i+
 	  \Lambda^{2}(DJ \tau \partial_t v_i )D\Lambda^{3}v_i+
 	  (\Lambda^{2}(JD\partial_t v_i)-J\Lambda^{2}D\partial_t v_i) D\Lambda^{3}v_i
 	  \\&= \mathcal{R}_{t,1} + \mathcal{R}_{t,2} + \mathcal{R}_{t,3}
 	  .\llabel{EQ99}
 	  \end{split}
 \end{align}
Using that $\Lambda$ is symmetric, the integral of $\mathcal{R}_{t,1}$
becomes
\begin{align}
	\begin{split}
	\int \mathcal{R}_{t,1}
	 =&
	  \frac{1}{2}\frac{d}{dt} \int J\Lambda^{2}Dv_i \Lambda^{3}Dv_i
	   -\frac{1}{2}\int J_t \Lambda^{2}Dv_i \Lambda^{3} Dv_i
	    \\&-\frac{1}{2}\int \left(\Lambda^{2}(J\Lambda^{2}Dv_i)-J\Lambda^{4}Dv_i\right)\Lambda D\partial_tv_i
	     \\&-\frac{1}{2}\int \left(J\Lambda^{4}Dv_i-\Lambda(J\Lambda^{3}Dv_i)\right)\Lambda D\partial_tv_i
	    = I_{v,t} + I_{v,1} + I_{v,2} + I_{v,3} 
	    ,\label{EQ100}
	    \end{split} 
\end{align}
from where $I_1$ becomes
\begin{align}
	I_1
	 = I_{v,t} + \sum_{l=1}^3 I_{v,l}
	  + \sum_{m=2}^3 \int \mathcal{R}_{t,m}
	  .\llabel{EQ101}
\end{align}
Now, $\mathcal{R}_{t,2}$ and $\mathcal{R}_{t,3}$ are estimated directly as
\begin{align}
	 \int (\mathcal{R}_{t,2} + \mathcal{R}_{t,3})
	 \le
	  C\Vert v\Vert_{H^{4}}
	   \Vert J\Vert_{H^{3}}\Vert v_t\Vert_{H^{2}}
	    \le C_MQ
	    ,\label{EQ102}
\end{align}
and for $I_{v,1}$ we have
\begin{align}
	I_{v,1} \le \Vert v\Vert_{H^{4}}\Vert v\Vert_{H^{3}}\Vert J_t\Vert_{H^{1.5+\delta}}
	 \lec C_MQ.
	 \llabel{EQ103}
\end{align}
Note that to establish $\Vert v_t\Vert_{H^{2}} \le C_MQ^\frac{1}{2}$,
we estimate $v_t$ directly from the velocity equation.
It still remains to estimate the commutator terms $I_{v,2}$
and $I_{v,3}$ which we do by writing
\begin{align}
	I_{v,2}+I_{v,3}
	 \le
	 C\Vert v_t\Vert_{H^{2}}
	  \Vert J\Vert_{H^{3.5+\delta}}\Vert v\Vert_{H^{4}}
	  \le C_MQ.
	  \label{EQ104} 
\end{align}

We now collect our estimates and conclude the proof.
Recalling \eqref{EQ80}, we use 
\eqref{EQ81}--\eqref{EQ83}, \eqref{EQ85}, \eqref{EQ90},
\eqref{EQ92}--\eqref{EQ93}, \eqref{EQ95}, \eqref{EQ97}--\eqref{EQ98}
and \eqref{EQ102}--\eqref{EQ104}, obtaining
\begin{align}
	\frac{1}{2}\frac{d}{dt}
	\left(\Vert D\Lambda^{2.5}w_t\Vert_{L^{2}(\Gamma_{1})}^2
	+\Vert D\Delta_\hh\Lambda^{2.5}w\Vert_{L^{2}(\Gamma_{1})}^2
	\right)
	+\nu 
	\Vert D\nabla_\hh\Lambda^{2.5}\Vert_{L^{2}(\Gamma_{1})}^2
	\le
	I_{v,t} + C_MQ
	.\label{EQ105}
\end{align}
Since $\Lambda^{3}Dv \in C([0,T];L^2)$ and 
$D\Delta_\hh\Lambda^{2.5}w$, $D\Lambda^{2.5}w_t$ belong to $C([0,T];L^2(\Gamma_{1}))$,
we may integrate \eqref{EQ105} in time
and estimate $\int_{0}^{t} I_{v,t}$ as
\begin{align}
	\begin{split}
	\int_{0}^{t} I_{v,t}
	 &\le
	  \int_{0}^{t} (J-I)\Lambda^{2}Dv_i \Lambda^{3}Dv_i\,ds
	  +\int_{0}^{t} \Lambda^{2}Dv_i \Lambda^{3}Dv_i\,ds
	  +     \Vert  v(0) \Vert_{H^{4}}^2
	 \\&\le
	 C\Vert J-I\Vert_{L^{\infty}}\Vert v\Vert_{H^{4}}^2
	 +    \Vert v\Vert_{L^2}^{1/4}
	 \Vert v\Vert_{H^{4}}^{7/4}
	  +     \Vert  v(0) \Vert_{H^{4}}^2
	  ,\llabel{EQ106}
	  \end{split}
\end{align}
from where it follows that
\begin{align}
	\begin{split}
		&
		\Vert  D \Delta_\hh\Lambda^{2.5} w\Vert_{L^2(\Gamma_1)}^2
		+       \Vert D \Lambda^{2.5} w_{t}\Vert_{L^2(\Gamma_1)}^2
		+    \nu \int_{0}^{t}   \Vert  D \nabla_\hh \Lambda^{2.5} w_{t} \Vert_{L^2(\Gamma_1)}^2 \, ds
		\\&\indeq
		\le
	    C\Vert J-I\Vert_{L^{\infty}}\Vert v\Vert_{H^{4}}^2
	    +    \Vert v\Vert_{L^2}^{1/4}
	    \Vert v\Vert_{H^{4}}^{7/4}
	    +	
		\Vert w_{t}(0)\Vert_{H^{3.5}(\Gamma_1)}^2
		+     \Vert  v(0) \Vert_{H^{4}}^2
		+C_M\int_{0}^{t} 
		Q(s)\,ds
		.
	\end{split}
	\llabel{EQ107}
\end{align} 
Recalling \eqref{EQ23} and sending $h \to 0^+$, we conclude
\begin{align}
	\begin{split}
		&
		\Vert  \nabla_\hh\Delta_\hh \Lambda^{2.5} w\Vert_{L^2(\Gamma_1)}^2
		+       \Vert \nabla_\hh \Lambda^{2.5} w_{t}\Vert_{L^2(\Gamma_1)}^2
		+    \nu \int_{0}^{t}   \Vert  \nabla_\hh \nabla_\hh \Lambda^{2.5} w_{t} \Vert_{L^2(\Gamma_1)}^2 \, ds
		\\&\indeq
		\le
		C\epsilon \Vert v\Vert_{H^{4}}^2
		+    \Vert v\Vert_{L^2}^{1/4}
		\Vert v\Vert_{H^{4}}^{7/4}
		+
		\Vert w_{t}(0)\Vert_{H^{3.5}(\Gamma_1)}^2
		+     \Vert  v(0) \Vert_{H^{4}}^2
		+C_M\int_{0}^{t} 
		Q(s)\,ds
		,
	\end{split}
	\llabel{EQ108}
\end{align} 
establishing~\eqref{EQ65}.
\end{proof}

\subsection{The vorticity estimate}\label{sec.vor}

In this section, we establish vorticity 
estimates.
We define the ALE vorticity as $\zeta = \omega \circ \eta$,
where $\omega$ is the Eulerian vorticity.
In coordinates, $\zeta$ reads
\begin{align}
	\zeta_i 
	 = 
	  \epsilon_{ijk}\partial_m v_k a_{mj}
	  ,\label{EQ109}
\end{align}
and it satisfies
\begin{align}
	\begin{split}
		\partial_{t}\zeta_i
		+ v_k a_{jk}\partial_{j} \zeta_i
		- \psi_t a_{j3} \partial_{j} \zeta_i
		= \zeta_k a_{mk}\partial_{m}v_i
		\comma i=1,2,3
		.
	\end{split}
	\llabel{EQ110}
\end{align}
From \eqref{EQ109} it follows that
\begin{align}
	\Vert \zeta\Vert_{H^{1.5+\delta}}^2
	 \le C_M
	 \andand
	  \Vert \zeta\Vert_{H^{3}}^2
	   \le
	    C_MQ
	    .
	    \llabel{EQ111}
\end{align}
Now, we provide a bound for the $H^3$ norm of the vorticity.

\cole
\begin{Lemma}
	\label{L.Vor}
	Let $\nu \in [0,1]$, and
	assume that
	$(v,w,q)$
	is a solution of the
	Euler-plate system \eqref{EQ14}--\eqref{EQ21}
	satisfying \eqref{EQ30}
	on~$[0,T]$.
	In addition, assume that
	\eqref{EQ25}--\eqref{EQ28}
	hold on $[0,\min\{T_0,T\}]$ 
	for some $M>0$,
	where $\epsilon>0$
	is given by \eqref{EQ23}
	and $T_0>0$ is 
	given by Theorem~\ref{T02}.
	Then, for 
	\begin{align}
		Q(t)
		=
		(\Vert v\Vert_{H^{4}}
		+\Vert w\Vert_{H^{5.5}(\Gamma_{1})}
		+\Vert w_t\Vert_{H^{3.5}(\Gamma_{1})}
		+1)^2
		,
   \llabel{EQ113}
	\end{align}
	we have
	\begin{align}
		\Vert \zeta\Vert_{H^{3}}^2
		\le
		\Vert v_0 \Vert_{H^{4}}^2
		+C_M\int_{0}^{t}
		Q(s)\,ds
		\comma
		t\in [0,\min\{T_0,T\}],
		\label{EQ112}
	\end{align}
where $C_M>0$ depends only on $M$
and in particular it does not depend on $\nu$ or~$Q(0)$.
\end{Lemma}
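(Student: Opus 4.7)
The plan is to perform an $L^2$ energy estimate at the top derivative level on the transport equation satisfied by $\zeta$, exploiting the fact that the effective ALE transport velocity is tangent to $\partial\Omega$. Rewrite the equation for $\zeta$ stated after \eqref{EQ109} as
\begin{equation*}
\partial_t \zeta_i + V_j \partial_j \zeta_i = \zeta_k a_{mk} \partial_m v_i,
\qquad V_j = v_k a_{jk} - \psi_t a_{j3}.
\end{equation*}
The first step is to verify the tangency $V_3|_{\Gamma_0 \cup \Gamma_1} = 0$. On $\Gamma_0$, the identity $\psi \equiv 0$ there gives $\partial_1\psi|_{\Gamma_0} = \partial_2\psi|_{\Gamma_0} = 0$ and $\psi_t|_{\Gamma_0} = 0$, and together with the slip condition \eqref{EQ16} one reads from \eqref{EQ06} that $V_3|_{\Gamma_0}=0$. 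On $\Gamma_1$, a short computation using \eqref{EQ06} and $\psi_t|_{\Gamma_1}=w_t$ gives $V_3 = (1/\partial_3\psi)(v_3 - v_1 \partial_1 w - v_2 \partial_2 w - w_t)$, which vanishes by the matching condition \eqref{EQ17} together with the form of $b_{3i}$ in \eqref{EQ08}.

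Next, I would apply $\partial^\alpha$ with $|\alpha|=3$ to the equation and pair with $\partial^\alpha \zeta_i$ in $L^2(\Omega)$. The top order transport term, after integration by parts in $x_j$, reduces to
\begin{equation*}
\int_\Omega V_j \partial_j(\partial^\alpha \zeta_i)\, \partial^\alpha \zeta_i
= -\frac12 \int_\Omega (\partial_j V_j)\, |\partial^\alpha \zeta_i|^2,
\end{equation*}
since the $\partial\Omega$ boundary integrals are proportional to $V_3$ and hence vanish by Step~1. The right-hand side is bounded by $C_M \|\zeta\|_{H^3}^2 \lec C_M Q$, using $\|\nabla V\|_{L^\infty} \lec C_M$ under the hypotheses. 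The commutator contributions $[\partial^\alpha, V_j\partial_j]\zeta$ are handled by a Kato-Ponce type estimate and controlled by $\|V\|_{H^3}\|\zeta\|_{H^3}$; this closes because $v\in H^4$, $\psi_t\in H^{2.5+\delta}$, and $a\in H^{3.5+\delta}$ together give $V\in H^3$. Finally, the stretching term $\partial^\alpha(\zeta_k a_{mk}\partial_m v_i)$ paired with $\partial^\alpha \zeta_i$ is estimated by the Sobolev product inequality as $C_M(\|\zeta\|_{H^3}^2 + \|\zeta\|_{H^3}\|v\|_{H^4}) \lec C_M Q$.

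Combining the top order estimate with the trivial lower order bound $\|\zeta\|_{H^{1.5+\delta}} \lec C_M$ recorded after \eqref{EQ109}, summing over $|\alpha|\le 3$, and integrating from $0$ to $t$ produces
\begin{equation*}
\|\zeta(t)\|_{H^3}^2 \le C \|\zeta(0)\|_{H^3}^2 + C_M \int_0^t Q(s)\, ds.
\end{equation*}
Since $\eta(\cdot,0)$ is the identity, $\zeta(0) = \curl v_0$ and hence $\|\zeta(0)\|_{H^3} \le C \|v_0\|_{H^4}$, yielding \eqref{EQ112} after absorbing the generic constant.

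The main technical obstacle I expect is the careful bookkeeping of the top order commutator $[\partial^\alpha, V_j\partial_j]\zeta$ when $|\alpha|=3$. The extreme term $\partial^3 V\cdot \partial\zeta$ demands the full $H^3$ regularity of $V$, while $\partial V\cdot \partial^3 \zeta$ requires $\nabla V\in L^\infty$; both are just barely available, and care is needed to close the bound with a linear power of $Q$ rather than a higher one. Equally crucial is the tangency $V_3|_{\partial\Omega}=0$ established in Step~1, as it removes what would otherwise be an uncontrollable boundary trace on $\partial^3 \zeta$ at the top level.
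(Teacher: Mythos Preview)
Your tangency observation $V_3|_{\partial\Omega}=0$ is correct and is a genuinely different device from what the paper uses: instead of tracking boundary terms, the paper extends all coefficients to $\Omega_0=\mathbb{T}^2\times\mathbb{R}$ and replaces $\zeta$ by the solution $\theta$ of the extended transport equation there, so that no boundary appears at all. Your route via tangency is more direct and would, at the formal level, yield the same inequality.

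However, your proof has a regularity gap that the paper explicitly flags and works around. Under the hypotheses you only have $\zeta\in L^\infty_T H^3$ and, from the equation, $\zeta_t\in L^\infty_T H^2$. Applying $\partial^\alpha$ with $|\alpha|=3$ and testing with $\partial^\alpha\zeta$ therefore requires the pairing $\langle \partial^\alpha\zeta_t,\partial^\alpha\zeta\rangle$, but $\partial^\alpha\zeta_t\in H^{-1}$ while $\partial^\alpha\zeta$ is only in $L^2$, so the energy identity $\tfrac{d}{dt}\|\partial^\alpha\zeta\|_{L^2}^2=2\langle\partial^\alpha\zeta_t,\partial^\alpha\zeta\rangle$ is not justified. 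The paper says exactly this (``we do not have sufficient regularity to utilize the energy method for three derivatives of $\zeta$'') and repairs it by replacing one full derivative with a difference quotient: it works with $D\partial^\alpha$, $|\alpha|=2$, so that $D\partial^\alpha\theta\in L^\infty_T L^2$ and $\partial_t(D\partial^\alpha\theta)=D\partial^\alpha\theta_t\in L^\infty_T L^2$ as well, and then lets $h\to0^+$ at the end. The extension to $\Omega_0$ is needed precisely because the difference quotient in the $x_3$ direction would otherwise sample values outside~$\Omega$.

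To salvage your approach you would need to insert a regularization step. One option is to follow the paper and use $D\partial^\alpha$ with $|\alpha|=2$; your tangency identity still kills the boundary term for the tangential choices $l=1,2$, but for $l=3$ you must either extend as the paper does or argue separately (e.g.\ recover $\partial_3$-derivatives from the equation). Without some such device, the top-order identity you wrote down is formal only.
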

\colb

\begin{proof}[Proof of Lemma~\ref{L.Vor}]
We only present our estimates for the highest order terms, i.e.,
when three derivatives apply to~$\zeta$.
Note that we do not have sufficient regularity to utilize the
energy method for three derivatives of $\zeta$, so we
rely on difference quotients.
We consider \eqref{EQ67} and \eqref{EQ68}
with $l=1,2,3$, and since we need to employ $D$
in the normal direction, we use a total Sobolev extension.
Denote by $\bar{f}$ the continuous extension of $f$
from $H^s$ to $H^s(\Omega_0)$ where $s\in [0,5.5]$ and
\begin{align}
	\Omega_0
	= \mathbb{T}^2 \times \mathbb{R}
	.\llabel{EQ114}
\end{align}
In addition, we require that 
$\bar{f} = 0$ in a neighborhood of 
$\mathbb{T}^2 \times (-1/2,3/2)^\text{c}$.

Thus, denote by $\theta$ the solution of the initial value problem
\begin{align}
	\begin{split}
		\partial_{t}\theta_i
		+ \bar{v}_k \bar{a}_{jk}\partial_{j} \theta_i
		-\bar{\psi}_t \bar{a}_{j3} \partial_{j} \theta_i
		&= \theta_k \bar{a}_{mk}\partial_{m}\bar{v}_i
		\inon{in $\Omega_0$}
		\commaone i=1,2,3
		\\
		\theta(0)&=\bar{\zeta}(0)
		.
	\end{split}
	\label{EQ115}
\end{align}
It follows from~\cite[Lemma 3.6]{KT} that 
\begin{align}
	\zeta = \theta \inon{on $\Omega \times [0,T]$}
	.\llabel{EQ116}
\end{align}
In addition, by performing $L^2$ estimates on \eqref{EQ115}
and utilizing the properties of the extension operator,
we conclude that
\begin{align}
	\theta(x,t) = 0
	\inon{on $(\mathbb{T}^2 \times (-1/2,3/2)^{c}) \times [0,T]$}
	.
	\llabel{EQ117}
\end{align}

Now, let $\alpha=(\alpha_1,\alpha_2,\alpha_3) \in \mathbb{N}^3_0$
be a multi-index with $|\alpha|=2$.
Applying $D\partial^\alpha$ to \eqref{EQ115}
and testing it with $D\partial^\alpha \theta$, we obtain
\begin{align}
	\begin{split}
		\frac12
		\frac{d}{dt}\Vert D\partial^\alpha \theta\Vert_{L^{2}(\Omega_0)}^2
		&=
		-
		\frac12 
		\int_{\Omega_0} g_j\partial_{j} |D\partial^\alpha\theta_i|^2 
		-\int_{\Omega_0} \partial^\alpha (Dg_j \tau \partial_j \theta_i) D \partial^\alpha \theta_i  
		+ \int_{\Omega_0} \partial^\alpha D (\theta_k h_{ki}) D \partial^\alpha \theta_i
		\\&\indeq
		+ \int_{\Omega_0} 
		      \left(g_j \partial^\alpha \partial_j D\theta_i - \partial^\alpha(g_j \partial_j D \theta_i)\right) 
		       D\partial^\alpha \theta_i
		\\&
		= I_1+I_2+I_3+I_4
		,
	\end{split}
	\label{EQ118}
\end{align}
where 
\begin{align}
	g_j= \bar{\psi}_t \bar{a}_{j3} - \bar{v}_k  \bar{a}_{jk}
        \andand
	 h_{ki}=\bar{a}_{mk}\partial_m \bar{v}_i.
	\llabel{EQ119}   
\end{align}
To estimate $I_1$, we integrate by parts in $x_j$, obtaining
\begin{align}
  \begin{split}
	I_1
	 &\le 
	  \Vert \nabla g\Vert_{L^{\infty}(\Omega_0)}\Vert D\partial^\alpha \theta\Vert_{L^{2}(\Omega_0)}^2
	   \le
	    C\Vert \zeta\Vert_{H^{3}}^2
	     \Vert a\Vert_{H^{2.5+\delta}}(\Vert v\Vert_{H^{2.5+\delta}}+\Vert \psi_t\Vert_{H^{2.5+\delta}})
     \\&
	      \le
	       C_M\Vert \zeta\Vert_{H^{3}}^2
	        \le C_MQ
	       .
  \end{split}
	       \label{EQ120}
\end{align}
Next, for $I_2$ we have
\begin{align}
	I_2
	 =
	  \sum_{0\le|\beta|\le |\alpha|}
	   {\alpha \choose \beta}
	    \int_{\Omega_0} \partial^\beta D g_j \partial^{\alpha-\beta} \tau \partial_j \theta_i D \partial^\alpha \theta_i
	     =
	   \sum_{0\le|\beta|\le |\alpha|}
	    {\alpha \choose \beta}
	       \int_{\Omega_0} \mathcal{R}_{2,\beta} \cdot D \partial^\alpha \theta_i
,\label{EQ121}
\end{align}
and we may estimate
\begin{equation}
	\llabel{EQ122}
	\Vert \mathcal{R}_{2,\beta}\Vert_{L^{2}} \le 
	\begin{cases}
		\Vert \partial^\beta D g\Vert_{L^\infty(\Omega_0)}
		 \Vert \partial^{\alpha-\beta} \tau \nabla \theta\Vert_{L^2(\Omega_0)}
		   \le
		    C_M\Vert \zeta\Vert_{H^{3}}
		     \le C_MQ^\frac12, & |{\beta}|=0 \\
		\Vert \partial^\beta D g\Vert_{L^3(\Omega_0)}
		 \Vert \partial^{\alpha-\beta} \tau \nabla \theta\Vert_{L^6(\Omega_0)}
		   \le
		    C_M\Vert \zeta\Vert_{H^{3}}
		     \le C_MQ^\frac12, & |{\beta}|=1 \\
		\Vert \partial^\beta D g\Vert_{L^6(\Omega_0)}
		 \Vert \partial^{\alpha-\beta} \tau \nabla \theta\Vert_{L^3(\Omega_0)}
		   \le  
		    C_MQ^\frac12 , &   |{\beta}|=2.
	\end{cases}
\end{equation}
We proceed to $I_3$ by rewriting it as
\begin{align}
	\begin{split}
	I_3
	&=
	\sum_{0\le|\beta|\le |\alpha|}
	{\alpha \choose \beta}
	\left(\int_{\Omega_0} \partial^\beta D h_{ki} \partial^{\alpha-\beta} \tau \theta_k D \partial^\alpha \theta_i
	 +
	  \int_{\Omega_0} \partial^\beta h_{ki} \partial^{\alpha-\beta} D \theta_k D \partial^\alpha \theta_i
	\right)
	\\&= \sum_{0\le|\beta|\le |\alpha|}
	{\alpha \choose \beta} \int_{\Omega_0}
	 (\mathcal{R}_{31,\beta} + \mathcal{R}_{32,\beta}) \cdot D \partial^\alpha \theta
	 ,
	 \llabel{EQ123}
	 \end{split}
\end{align}
and for $\mathcal{R}_{31,\beta}$ we have
\begin{equation}
	\llabel{EQ124}
	\Vert \mathcal{R}_{31,\beta}\Vert_{L^{2}} \le 
	\begin{cases}
		\Vert \partial^\beta D h\Vert_{L^3(\Omega_0)}
		\Vert \partial^{\alpha-\beta} \tau \theta\Vert_{L^6(\Omega_0)}
		\le
		C_M\Vert \zeta\Vert_{H^{3}}
		 \le C_MQ^\frac12, & |{\beta}|=0 \\
		\Vert \partial^\beta D h\Vert_{L^6(\Omega_0)}
		\Vert \partial^{\alpha-\beta} \tau \theta\Vert_{L^3(\Omega_0)}
		\le
		C_MQ^\frac12, & |{\beta}|=1 \\
		\Vert \partial^\beta D h\Vert_{L^2(\Omega_0)}
		\Vert \partial^{\alpha-\beta} \tau \theta\Vert_{L^\infty(\Omega_0)}
		\le  
		C_MQ^\frac12, &   |{\beta}|=2,
	\end{cases}
\end{equation}
while for $\mathcal{R}_{32,\beta}$ we obtain
\begin{equation}
	\label{EQ125}
	\Vert \mathcal{R}_{32,\beta}\Vert_{L^{2}}  \le 
	\begin{cases}
		\Vert \partial^\beta h\Vert_{L^\infty(\Omega_0)}
		\Vert \partial^{\alpha-\beta} D \theta\Vert_{L^2(\Omega_0)}
		\le
		C_M\Vert \zeta\Vert_{H^{3}}
		 \le C_MQ^\frac12, & |{\beta}|=0 \\
		\Vert \partial^\beta h\Vert_{L^3(\Omega_0)}
		\Vert \partial^{\alpha-\beta} D \theta\Vert_{L^6(\Omega_0)}
		\le
		C_MQ^\frac12, & |{\beta}|=1 \\
		\Vert \partial^\beta h\Vert_{L^3(\Omega_0)}
		\Vert \partial^{\alpha-\beta} D \theta\Vert_{L^3(\Omega_0)}
		\le  
		C_MQ^\frac12, &   |{\beta}|=2.
	\end{cases}
\end{equation}
Using \eqref{EQ121}--\eqref{EQ125}, we may thus write
\begin{align}
	I_2+I_3
	 \le
	 C_MQ
	 .\label{EQ126}
\end{align}
Finally, we expand $I_4$ by writing
\begin{align}
	I_4
	 =
	  -\sum_{1\le|\beta|\le |\alpha|}
	  {\alpha \choose \beta}
	   \int_{\Omega_0} 
	    \partial^\beta g_j \partial^{\alpha-\beta} D \partial_j \theta_i D \partial^\alpha \theta_i
	    =
	     -\sum_{1\le|\beta|\le |\alpha|}
	     {\alpha \choose \beta}
	     \int_{\Omega_0} 
	      \mathcal{R}_{4,\beta} \cdot D \partial^\alpha \theta
	      \llabel{EQ127}
\end{align}
and estimate $\mathcal{R}_{4,\beta}$ as
\begin{equation}
	\label{EQ128}
	\Vert \mathcal{R}_{4,\beta}\Vert_{L^{2}}  \le 
	\begin{cases}
		\Vert \partial^\beta g\Vert_{L^\infty(\Omega_0)}
		\Vert \partial^{\alpha-\beta} D \partial_j \theta\Vert_{L^2(\Omega_0)}
		\le
		C_M\Vert \zeta\Vert_{H^{3}}
		 \le C_MQ^\frac12, & |{\beta}|=1 \\
		\Vert \partial^\beta g\Vert_{L^3(\Omega_0)}
		\Vert \partial^{\alpha-\beta} D \partial_j\theta\Vert_{L^6(\Omega_0)}
		\le
		C_M\Vert \zeta\Vert_{H^{3}}
		 \le C_MQ^\frac12, & |{\beta}|=2.
	\end{cases}
\end{equation}
We now collect \eqref{EQ118}, \eqref{EQ120}, \eqref{EQ126}--\eqref{EQ128},
and then using 
$D\partial^\alpha \theta \in C_TL^2$, we integrate in time, obtaining
\begin{align}
	  \Vert D \partial^\alpha \theta\Vert_{L^{2}(\Omega_0)}^2
	 \le
	  \Vert v_0 \Vert_{H^{4}}^2
	   +\int_{0}^{t}
	      C_MQ\,ds
	  ,\llabel{EQ129}
\end{align}
from where, letting $h \to 0^+$ and recalling that
$\Vert \zeta\Vert_{H^{3}} \le \Vert \theta\Vert_{H^{3}(\Omega_0)}$
we conclude that \eqref{EQ112} holds.
\end{proof}

\subsection{Conclusion of the proof of Proposition~\ref{P01}}\label{sec.con}

Now, we combine the pressure, tangential, and vorticity
estimates and present the proof of Proposition~\ref{P01}.

\begin{proof}[Proof of Proposition~\ref{P01}]
 Let $M>0$ be given, and assume that
 there exists a solution $(v,q,w)$
 satisfying the assumptions in
 Proposition~\ref{P01}. 
 We start by writing the div-curl inequality
\begin{align}
	\Vert v\Vert_{H^{4}}
	 \le
	  C(\Vert \curl v\Vert_{H^{3}} 
	   +\Vert \div v\Vert_{H^{3}} 
	    +\Vert v\cdot N\Vert_{H^{3.5}(\Gamma_{0} \cup \Gamma_{1})}
	     +\Vert v\Vert_{L^{2}}
	  )   ;\label{EQ130}
\end{align}
see~\cite{BB}.
For the divergence term, we use \eqref{EQ14}$_2$ to get
\begin{align}
  \begin{split}
	\Vert \div v\Vert_{H^{3}}^2
	 &=
	  \Vert (a_{ki}-\delta_{ki})\partial_k v_i\Vert_{H^{3}}^2
	   \leq
     \colr
	  C\Vert a_{ki}-\delta_{ki}\Vert_{L^{\infty}}
	  \Vert\partial_k v_i\Vert_{H^{3}}^2
	  +
          C
	  \Vert a_{ki}-\delta_{ki}\Vert_{H^{3}}
          \Vert \partial_k v_i\Vert_{L^{\infty}}^2
    \colb
         \\&
          \leq
	    C\epsilon \Vert v\Vert_{H^{4}}^2
	    +C_M
  \end{split}
	    .\label{EQ131}
\end{align}
Next, for the boundary term in \eqref{EQ130}, we employ
\eqref{EQ16} and \eqref{EQ17} and obtain
\begin{align}
	\begin{split}
	\Vert v \cdot N\Vert_{H^{3.5}(\Gamma_{0} \cup\Gamma_{1})}^2
	 &\le
	 C(\epsilon\Vert v\Vert_{H^{4}}^2
	  +\Vert b-I\Vert_{H^{4}}^2\Vert v\Vert_{H^{1.5+\delta}}^2
	   +\Vert w_t\Vert_{H^{3.5}(\Gamma_{1})}^2
	  ) \\&\le
	       C\epsilon \Vert v\Vert_{H^{4}}^2
	       +\Vert w\Vert_{H^{5.5}(\Gamma_{1})}^2 
	       +\Vert w_t\Vert_{H^{3.5}(\Gamma_{1})}^2
	       +C_M
	      ,\label{EQ132}
	      \end{split}
\end{align}
where we have used 
\begin{align}
	\Vert b-I\Vert_{H^{4}}\Vert v\Vert_{H^{1.5+\delta}}
	\le
	C_M\Vert b-I\Vert_{H^{5}}^\frac{1}{3}\Vert b-I\Vert_{H^{3.5}}^\frac{2}{3} 
	\le
	\Vert b-I\Vert_{H^{5}(\Gamma_{1})} 
	+C_M
	\le
	 \Vert w\Vert_{H^{5.5}(\Gamma_{1})}
	 +C_M
	.\llabel{EQ133}
\end{align}
Finally, for $\curl v$, we recall \eqref{EQ109} so that
\begin{align}
	\begin{split}
	\Vert (\curl v)_i\Vert_{H^{3}}^2
	 &\le
	  \Vert \epsilon_{ijk}\partial_m v_k (a_{mj}-\delta_{mj})\Vert_{H^{3}}^2
	   +\Vert \zeta_i\Vert_{H^{3}}^2
	   \\&\le
	     C\epsilon \Vert v\Vert_{H^{4}}^2 
	       +\Vert v(0)\Vert_{H^{4}}^2
	        +C_M\int_0^t Q\,ds
	        ,\label{EQ134}
	       \end{split}
\end{align}
from where, combining this with \eqref{EQ130}--\eqref{EQ132} and
recalling \eqref{EQ23}, we arrive at
\begin{align}
	\Vert v\Vert_{H^{4}}^2
	 \le
	   C(\Vert v_0\Vert_{H^{4}}^2
	   +
	   \Vert w\Vert_{H^{4.5}(\Gamma_1)}^2
	   +\Vert w_t\Vert_{H^{3.5}(\Gamma_1)}^2
	   )+
	   C_M\left(
	   1
	   + \int_0^t Q\,ds
	   \right)    .\label{EQ135}
\end{align}
Now, we multiply \eqref{EQ135} by $\epsilon_0$, recalling that $\epsilon_0 \leq 1/(2\Cabs)$,  
and add it to the
tangential estimates \eqref{EQ65} obtaining
\begin{align}
	\begin{split}
		&
		\epsilon_0\Vert v\Vert_{H^{4}}^2
		+
		\frac12(\Vert  w\Vert_{H^{5.5}(\Gamma_1)}^2
		+       \Vert w_{t}\Vert_{H^{3.5}(\Gamma_1)}^2
		)+    \nu \int_{0}^{t}   \Vert  w_{t} \Vert_{H^{4.5}(\Gamma_1)}^2 \, ds
		\\&\indeq
		\le
		C\epsilon  \Vert v\Vert_{H^{4}}
		+\Vert v\Vert_{L^2}^{1/4}
		\Vert v\Vert_{H^{4}}^{7/4}
		+
		\Vert w_{t}(0)\Vert_{H^{3.5}(\Gamma_1)}^2
		+     \Vert  v(0) \Vert_{H^{4}}^2
		+
		C_M\left(1
		+    \int_{0}^{t} 
		Q\,ds
		\right).
	\end{split}
	\label{EQ136}
\end{align} 
Finally, recalling \eqref{EQ23} again, we may absorb the factors of $\Vert v\Vert_{H^{4}}$
on the right-hand side and conclude~\eqref{EQ33}.
\end{proof}

\section{Proof of Theorem~\ref{T01}: Uniqueness}\label{sec.un}
In this section, we establish the uniqueness of
solutions constructed in the previous section.
Let $(v,w,q,\eta,a,b,\psi)$ and
$(\tilde{v},\tilde{w},\tilde{q},\tilde{\eta},\tilde{a},\tilde{b},\tilde{\psi})$
be two solutions
of the Euler-plate system
emanating from the same initial data, 
satisfying the compatibility conditions~\eqref{EQ19} and \eqref{EQ20} and achieving the regularity given in \eqref{EQ30}, for some $T>0$.
Denote the difference of these solutions by
\begin{align}
	(V,W,Q,A,B,\Psi)
	 =
	  (v,w,q,a,b,\psi)
	   -
	    (\tilde{v},\tilde{w},\tilde{q},\tilde{\eta},\tilde{a},\tilde{b},\tilde{\psi})
	    .
   \llabel{EQ137}
\end{align}
We aim to show 
\begin{align}
	P(t) \lec
	 \int_0^t P(s) \,ds
	 ,\label{EQ138}
\end{align}
where 
\begin{align}
	P(t)=(\Vert V(t)\Vert_{H^{1.5+\delta}}
	 +\Vert W(t)\Vert_{H^{3+\delta}(\Gamma_1)}
	  +\Vert W_t(t)\Vert_{H^{1+\delta}(\Gamma_{1})})^2
	  \andand
	  P(0)=0
	  \llabel{EQ139}
\end{align}
by establishing the pressure, tangential, and vorticity estimates.
In this section, we employ the symbol ``$\lec $'' to abbreviate ``$\leq C $'', 
where $C>0$ is a constant that can change from line to line.
In doing so, we allow the implicit constants to depend on $T>0$
and the norms \eqref{EQ22} of the solutions. 
Upon taking the difference in \eqref{EQ03}--\eqref{EQ08},
we may conclude that
\begin{align}
	\Vert A\Vert_{H^{2.5+\delta}}
	 +\Vert B\Vert_{H^{2.5+\delta}}
	  +\Vert B_t\Vert_{H^{0.5+\delta}}
	   +\Vert J-\tilde{J}\Vert_{H^{2.5+\delta}}
	    +\Vert \Psi_t\Vert_{H^{1.5+\delta}}
	    \lec
	     \Vert W\Vert_{H^{3+\delta}(\Gamma_1)}
	      +\Vert W_t\Vert_{H^{1+\delta}}
	      .\label{EQ140}
\end{align}

To establish \eqref{EQ138}, we first show that $\Vert Q\Vert_{H^{0.5+\delta}}\lec P^\frac12$.
Note that
$Q$ solves the elliptic problem
\begin{align}
	\begin{split}
		\partial_j(\tilde{b}_{ji}\tilde{a}_{ki}\partial_kQ) 
		&=
		-\partial_j((B_{ji}a_{ki}+\tilde{b}_{ji}A_{ki})\partial_kq)+ 
		 F
		,\inon{in $\Omega$}
		\\
		\tilde{b}_{3i}\tilde{a}_{ki}\partial_k Q &= 
		 - (B_{3i}a_{ki}+\tilde{b}_{3i}A_{ki})\partial_k q
		,\hspace{1.2cm} \inon{on $\Gamma_{0}$}
		\\
		\tilde{b}_{3i}\tilde{a}_{ki}\partial_k Q+ Q &= 
		-(B_{3i}a_{ki}+\tilde{b}_{3i}A_{ki})\partial_k q
		+G
		,\hspace{0.5cm} \inon{on $\Gamma_{1}$}
		.\label{EQ141}
	\end{split}
\end{align}
where we specify $F$ and $G$ next.
First, we write
\begin{align}
	F = F_{B} + F_{V} + F_{A}+ F_{J-\tilde{J}}+ F_{\Psi}
	,\label{EQ142}
\end{align}
with the first two terms in \eqref{EQ142} given by
  \begin{align}
	%\begin{split}
	F_{B}=
	%&
	\partial_j(\partial_tB_{ji}v_i)
	 -B_{ji} \partial_{j}(v_m a_{km}) \partial_{k} v_i
	 +B_{ji}\partial_{j}(J^{-1}\psi_t)\partial_{3}v_i
	 %\\&
	 +\tilde{v}_m \tilde{a}_{km} \partial_{k}B_{ji} \partial_{j}v_i
	 -\tilde{J}^{-1}\tilde{\psi}_t\partial_{3}B_{ji}\partial_{j}v_i
	\label{EQ143}
	 %\end{split}
   \end{align}
and
\begin{align}
	\begin{split}
      F_{V}=
      &
      \partial_j(\partial_t \tilde{b}_{ji}V_i) 
      -\tilde{b}_{ji} \partial_{j}(V_m a_{km}) \partial_{k} v_i
      -\tilde{b}_{ji} \partial_{j}(\tilde{v}_m \tilde{a}_{km}) \partial_{k} V_i
      +\tilde{b}_{ji}\partial_{j}(\tilde{J}^{-1}\tilde{\psi}_t)\partial_{3}V_i
      \\&
      +V_m a_{km} \partial_{k}b_{ji} \partial_{j}v_i
      +\tilde{v}_m \tilde{a}_{km} \partial_{k}\tilde{b}_{ji} \partial_{j}V_i
      -\tilde{J}^{-1}\tilde{\psi}_t\partial_{3}\tilde{b}_{ji}\partial_{j}V_i
      ,\label{EQ144}	
	\end{split}
\end{align}
while the rest of the terms read
 \begin{align}
 	\begin{split}
       F_{A}=
       \tilde{v}_m A_{km} \partial_{k}b_{ji} \partial_{j}v_i 
      -\tilde{b}_{ji} \partial_{j}(\tilde{v}_m A_{km}) \partial_{k} v_i
     \andand
      F_{\Psi}
      =
      - \tilde{b}_{ji}\partial_{j}(\tilde{J}^{-1}\Psi_t)\partial_{3}v_i
      -\tilde{J}^{-1}\Psi_t\partial_{3}b_{ji}\partial_{j}v_i
      \label{EQ145}
 	\end{split}
 \end{align}
 and
\begin{align}
	\begin{split}
     F_{J-\tilde{J}}
     = \tilde{b}_{ji}\partial_{j}((J^{-1}-\tilde{J}^{-1})(v_3-\psi_t))\partial_{3}v_i
     +(J^{-1}-\tilde{J}^{-1})(v_3-\psi_t)\partial_{3}b_{ji}\partial_{j}v_i
     .\label{EQ146}		
	\end{split}
\end{align}
Next, we write $G$ in \eqref{EQ141}$_3$ as
\begin{align}
	G=
	 G_W+G_V+G_B+G_{J-\tilde{J}}
	 ,\label{EQ147}
\end{align}
where $G_W$ and $G_V$ are specified as
\begin{align}
	\begin{split}
		G_W
		 =
		  \Delta_2^2 W 
		  -  \nu   \Delta_2   W_{t}
		-\tilde{J}^{-1}
		\biggl(
		\sum_{j=1}^{2}  
		\tilde{v}_k \tilde{b}_{jk} \partial_{j}W_t
		+ W_t \partial_{3}b_{3i} v_i
		\biggr)
		\llabel{EQ148}    
	\end{split}
\end{align}
and
\begin{align}
	\begin{split}
		G_V
		 =
		  \partial_{t}\tilde{b}_{3i}V_i
		-\tilde{J}^{-1}
		\biggl(
		\sum_{j=1}^{2}  
		V_k b_{jk} \partial_{j}w_t
		+ \tilde{w}_t \partial_{3}\tilde{b}_{3i} V_i
		- \partial_{j}\tilde{b}_{3i} V_k b_{jk} v_i
		- \partial_{j}\tilde{b}_{3i} \tilde{v}_k \tilde{b}_{jk} V_i
		\biggr)
		    ,\llabel{EQ149}
	\end{split}
\end{align}
while $G_B$ and $G_{J-\tilde J}$ are given by
\begin{align}
	\begin{split}
		G_B=
		 \partial_{t}B_{3i}v_i
		-\tilde{J}^{-1}
		\biggl(
		\sum_{j=1}^{2}  
		\tilde{v}_k B_{jk} \partial_{j}w_t
		+ \tilde{w}_t \partial_{3}B_{3i} v_i
		- \partial_{j}B_{3i} v_k b_{jk} v_i
		- \partial_{j}\tilde{b}_{3i} \tilde{v}_k B_{jk} v_i
		\biggr)
		,\llabel{EQ150} 
	\end{split}
\end{align}
and
\begin{align}
	\begin{split}
		G_{J-\tilde{J}}
		=
		-(J^{-1}-\tilde{J}^{-1})
		\biggl(
		\sum_{j=1}^{2}  v_k b_{jk} \partial_{j}w_t
		+ w_t \partial_{3}b_{3i} v_i
		-       \partial_{j}      b_{3i} v_k b_{jk} v_i
		\biggr)
		.\llabel{EQ151}
	\end{split}
\end{align}
Now, before estimating $Q$ in $H^{0.5+\delta}$,
we rewrite it as $Q=Q_1+Q_2$ where
$Q_1$ solves the elliptic problem
\begin{align}
	\begin{split}
		\partial_j(\tilde{b}_{ji}\tilde{a}_{ki}\partial_kQ_1) 
		&=
		-\partial_j((B_{ji}a_{ki}+\tilde{b}_{ji}A_{ki})\partial_kq)+ 
		F
		,\inon{in $\Omega$}
		\\
		\tilde{b}_{3i}\tilde{a}_{ki}\partial_k Q_1 &= 
		0
		,\hspace{4.5cm} \inon{on $\Gamma_{0}$}
		\\
		\tilde{b}_{3i}\tilde{a}_{ki}\partial_k Q_1+ Q_1 &= 
		0
		,\hspace{4.5cm} \inon{on $\Gamma_{1}$}
		,\llabel{EQ152}
	\end{split}
\end{align}
and $Q_2$ solves
\begin{align}
	\begin{split}
		\partial_j(\tilde{b}_{ji}\tilde{a}_{ki}\partial_kQ_2) 
		&=
		0
		,\hspace{4.4cm} \inon{in $\Omega$}
		\\
		\tilde{b}_{3i}\tilde{a}_{ki}\partial_k Q_2 &= 
		- (B_{3i}a_{ki}+\tilde{b}_{3i}A_{ki})\partial_k q
		,\hspace{1.1cm} \inon{on $\Gamma_{0}$}
		\\
		\tilde{b}_{3i}\tilde{a}_{ki}\partial_k Q_2+ Q_2 &= 
		-(B_{3i}a_{ki}+\tilde{b}_{3i}A_{ki})\partial_k q
		+G
		.\hspace{0.45cm} \inon{on $\Gamma_{1}$}
		.\llabel{EQ153}
	\end{split}
\end{align}
To estimate $Q_1$,
we employ~\cite[Theorem 7.5]{LM}
so that
\begin{align}
  \begin{split}
	\Vert Q_1\Vert_{H^{0.5+\delta}}
	 &\lec
	  \Vert (B_{ji}a_{ki}+\tilde{b}_{ji}A_{ki})\partial_kq\Vert_{H^{0.5+\delta}}
	   +\Vert F\Vert_{H^{-0.5+\delta}}
        \\&
	      \lec 
	       \Vert A\Vert_{H^{1.5+\delta}}+\Vert B\Vert_{H^{1.5+\delta}}+\Vert F\Vert_{H^{-0.5+\delta}}
  .
  \end{split}
  \label{EQ154}  
\end{align}
Now, we estimate $F$ given in~\eqref{EQ142}
by writing
\begin{align}
	\Vert F\Vert_{H^{-0.5+\delta}}
	 \lec
	 \Vert B_t\Vert_{H^{0.5+\delta}}
	  +\Vert B\Vert_{H^{0.5+\delta}}
	  +\Vert V\Vert_{H^{0.5+\delta}}
	  +	\Vert A\Vert_{H^{0.5+\delta}}
	  +\Vert J-\tilde{J}\Vert_{H^{0.5+\delta}}
	  +	\Vert \Psi_t\Vert_{H^{0.5+\delta}},
	  \llabel{EQ155}
\end{align}
and combining this with \eqref{EQ154},
	we conclude that
	\begin{align}
		\Vert Q_1\Vert_{H^{1.5+\delta}}
		 \lec
		  \Vert A\Vert_{H^{1.5+\delta}}+\Vert B\Vert_{H^{1.5+\delta}}
		  +\Vert \partial_t B\Vert_{H^{0.5+\delta}}
		  +\Vert V\Vert_{H^{0.5+\delta}}
	+\Vert J-\tilde{J}\Vert_{H^{0.5+\delta}}
	+	\Vert \Psi_t\Vert_{H^{0.5+\delta}}
	.\label{EQ156}	   
	\end{align}
We proceed to estimate $Q_2$,
employing~\cite[Theorem 6.7]{LM}, so that
 \begin{align}
 	\Vert Q_2\Vert_{H^{0.5+\delta}}
 	 \lec
 	  \Vert B\Vert_{H^{1.5+\delta}}+\Vert A\Vert_{H^{1.5+\delta}}
 	  +\Vert G\Vert_{H^{-1+\delta}(\Gamma_{1})}
 	  .\llabel{EQ157}
 \end{align}
Recalling that $G$ is as in \eqref{EQ147}, we obtain
\begin{align}
	\begin{split}
	\Vert G\Vert_{H^{-1+\delta}(\Gamma_{1})}
	 \lec&
	  \Vert W\Vert_{H^{3+\delta}(\Gamma_{1})}
	   +\Vert W_t\Vert_{H^{1+\delta}(\Gamma_{1})}
	    +\Vert V\Vert_{H^{-0.5+\delta}}
	    +\Vert B_t\Vert_{H^{-0.5+\delta}}
	    \\& +\Vert B\Vert_{H^{0.5+\delta}}
	     +\Vert J-\tilde{J}\Vert_{H^{-0.5+\delta}}
	     ,\llabel{EQ158}
	     \end{split}
\end{align}
from where we conclude that
\begin{align}
	\begin{split}
	\Vert Q_2\Vert_{H^{0.5+\delta}}
	\lec& 
	\Vert B\Vert_{H^{1.5+\delta}}+\Vert A\Vert_{H^{1.5+\delta}}
	+ \Vert W\Vert_{H^{3+\delta}(\Gamma_{1})}
	+\Vert W_t\Vert_{H^{1+\delta}(\Gamma_{1})}
	\\&+\Vert V\Vert_{H^{-0.5+\delta}}
	+\Vert B_t\Vert_{H^{-0.5+\delta}}
	+\Vert J-\tilde{J}\Vert_{H^{-0.5+\delta}}
    .
	\end{split}\label{EQ159}
\end{align}
Since $Q=Q_1+Q_2$, 
combining \eqref{EQ156} with \eqref{EQ159}
and using \eqref{EQ140}, we get
\begin{align}
	\Vert Q\Vert_{H^{0.5+\delta}}
	 \lec
	  \Vert W\Vert_{H^{3+\delta}(\Gamma_{1})}
	  +\Vert W_t\Vert_{H^{1+\delta}(\Gamma_{1})}
	+\Vert V\Vert_{H^{0.5+\delta}}
	\lec P^\frac12
	.\label{EQ160}  
\end{align} 

We now proceed to tangential estimates and aim to show that
\begin{align}
	\begin{split}
		\Vert \Lambda^{1+\delta}W_t\Vert_{L^{2}(\Gamma_{1})}^2
		+\Vert \Delta_\hh\Lambda^{1+\delta}W\Vert_{L^{2}(\Gamma_{1})}^2
		+\nu 
		\int_{0}^{t}\Vert \nabla_\hh\Lambda^{1+\delta}W_t\Vert_{L^{2}(\Gamma_{1})}^2
		\lec
		\Vert V\Vert_{L^{2}}^\frac{1}{1.5+\delta}\Vert V\Vert_{H^{1.5+\delta}}^\frac{2+2\delta}{1.5+\delta}
		+\int_{0}^{t} P.
		\label{EQ161}
	\end{split}
\end{align}
Note that $W$ satisfies
\begin{align}
	W_{tt}+\Delta_\hh^2W -\nu \Delta_\hh W_t = Q
	.\label{EQ162}
\end{align}
We apply $\Lambda^{1+\delta}$ to \eqref{EQ162}
and test it with $\Lambda^{1+\delta}W_t$, obtaining
\begin{align}
	\frac{1}{2}\frac{d}{dt}
	\left(\Vert \Lambda^{1+\delta}W_t\Vert_{L^{2}(\Gamma_{1})}^2
	+\Vert \Delta_\hh\Lambda^{1+\delta}W\Vert_{L^{2}(\Gamma_{1})}^2
	\right)
	+\nu 
	\Vert \nabla_\hh\Lambda^{1+\delta}W_t\Vert_{L^{2}(\Gamma_{1})}^2
	=
	\int_{\Gamma_1} \Lambda^{1+\delta}Q \Lambda^{1+\delta}W_{t} 
	.
	\label{EQ163}
\end{align}
Now, we use the strategy in Section~\ref{sec.tan}
to cancel the pressure term in~\eqref{EQ163}.
We rewrite the difference of the velocity equations as
\begin{align}
	H+b_{ki}\partial_kQ=H_V+H_{J-\tilde{J}}+H_B+H_\Psi+b_{ki}\partial_kQ=0
	,\label{EQ164}
\end{align}
where $H_V$ and $H_B$ are given by
\begin{align}
	H_V=J\partial_{t} V_i
	+ V_1 b_{j1} \partial_{j}v_i
		+ \tilde v_1 \tilde b_{j1} \partial_{j}V_i
		+ V_2 b_{j2} \partial_{j}v_i
		+ \tilde v_2 \tilde b_{j2} \partial_{j}V_i
		+ V_3b_{j3}\partial_{j} v_i
	+ (\tilde v_3-\tilde\psi_t)\tilde b_{j3}\partial_{j} V_i
	\label{EQ165}
\end{align}
and
\begin{align}
	H_B=	 \tilde v_1 B_{j1} \partial_{j}v_i
	+ \tilde v_2 B_{j2} \partial_{j}v_i
	+ (\tilde v_3-\tilde\psi_t)B_{j3}\partial_{j} v_i
	+ B_{ki}\partial_{k}\tilde q
	,\label{EQ166}   	
\end{align}
while
\begin{align}
	H_{J-\tilde{J}}=    (J-\tilde J)\partial_{t} \tilde{v}_i
	\andand
	H_\Psi=-\Psi_tb_{j3}\partial_{j} v_i
	.\label{EQ167}
\end{align}
Also, the divergence-free and boundary conditions become
\begin{align}
	\begin{split}
		b_{ki} \partial_{k}V_i=
		-     B_{ki} \partial_{k} \tilde v_i
		\text{ in }\Omega
		\andand
		 b_{3i}V_i= W_t -B_{3i}\tilde{v}_i
		  \text{ on }\Gamma_{1}
      .
	\end{split}
	\label{EQ168}
\end{align}
We note in passing that using the pressure estimates
\eqref{EQ160} and the equation \eqref{EQ164} for the difference of velocities,
it follows that
\begin{align}
	\Vert V_t\Vert_{H^{-0.5+\delta}}^2
	 \lec P
	 .\llabel{EQ169}
\end{align}
Now, we apply $\Lambda^{0.5+\delta}$ to \eqref{EQ164}
and test it with $\Lambda^{1.5+\delta}V$.
Then, the steps leading to \eqref{EQ80} may be modified
so that we get
\begin{align}
	\frac{1}{2}\frac{d}{dt}
	\left(\Vert \Lambda^{1+\delta}W_t\Vert_{L^{2}(\Gamma_{1})}^2
	+\Vert \Delta_\hh\Lambda^{1+\delta}W\Vert_{L^{2}(\Gamma_{1})}^2
	\right)
	+\nu 
	\Vert \nabla_\hh\Lambda^{1+\delta}W_t\Vert_{L^{2}(\Gamma_{1})}^2
	=
	I_H+I_Q+I_{Q,\Gamma_{1}}
	,
	\label{EQ170}
\end{align}
where $I_H$ and $I_Q$ are given by
\begin{align}
	I_H=
	 \int \Lambda^{0.5+\delta}H\Lambda^{1.5+\delta}V
	 ,\llabel{EQ171}
\end{align}
and 
\begin{align}
	\begin{split}
	I_Q=&
	-\int \left(\Lambda^{1.5+\delta}(b_{ki}Q)-b_{ki}\Lambda^{1.5+\delta}Q\right)\Lambda^{0.5+\delta}\partial_kV_i
	-\int \left(b_{ki}\Lambda^{1.5+\delta}Q-\Lambda (b_{ki}\Lambda^{0.5+\delta}Q)\right)\Lambda^{0.5+\delta}\partial_kV_i
	\\&
	-\int \Lambda^{0.5+\delta}Q\left(b_{ki}\Lambda^{1.5+\delta}\partial_kV_i-\Lambda^{1.5+\delta}(b_{ki}\partial_kV_i)\right)
	+\int \Lambda^{0.5+\delta}Q\Lambda^{1.5+\delta}(B_{ki}\partial_k\tilde{v}_i)
	,\llabel{EQ172}\end{split}
\end{align}
where we used \eqref{EQ168}$_1$,
and $I_{Q,\Gamma_{1}}$ is equal to
\begin{align}
	\begin{split}
	I_{Q,\Gamma_{1}}
	=&\int_{\Gamma_{1}}
	 \left(\Lambda^{1+\delta}(b_{3i}Q)-b_{3i}\Lambda^{1+\delta}Q\right)\Lambda^{1+\delta}V_i
	 %\\&
	  +\int_{\Gamma_{1}} \Lambda^{\delta}Q \left(\Lambda(b_{3i}\Lambda^{1+\delta}V_i-b_{3i}\Lambda^{2+\delta}V_i)\right)
	  \\&
	  +\int_{\Gamma_{1}} \Lambda^{\delta}Q \left(b_{3i}\Lambda^{2+\delta}V_i-\Lambda^{2+\delta}(b_{3i}V_i)\right)
	  -\int_{\Gamma_{1}} \Lambda^{1+\delta}Q \Lambda^{1+\delta}(B_{3i}\tilde{v}_i)
	 .\llabel{EQ173}
	 \end{split}
\end{align}
We estimate $I_Q+I_{Q,\Gamma_{1}}$ as
\begin{align}
	I_Q+I_{Q,\Gamma_{1}}
	 \lec
	  \Vert Q\Vert_{H^{0.5+\delta}}
	  (\Vert V\Vert_{H^{1.5+\delta}}
	  +\Vert B\Vert_{H^{2.5+\delta}})
	   \lec
	    \Vert V\Vert_{H^{1.5+\delta}}^2
	     +\Vert W\Vert_{H^{3+\delta}(\Gamma_1)}^2
	      +\Vert W_t\Vert_{H^{1+\delta}(\Gamma_1)}^2
,\label{EQ174}
\end{align}
leaving us with~$I_H$. Recalling \eqref{EQ164}--\eqref{EQ167}, we write
\begin{align}
  \begin{split}
	I_H
	&\lec
	\int \Lambda^{0.5+\delta}(J\partial_tV_i) \Lambda^{1.5+\delta}V
	+
	\int
    (\Vert V\Vert_{H^{1.5+\delta}}
     +\Vert J-\tilde{J}\Vert_{H^{1.5+\delta}}
    \\&\indeq
	     +\Vert B\Vert_{H^{1.5+\delta}}
	      +\Vert \Psi_t\Vert_{H^{1.5+\delta}})
	     \Vert V\Vert_{H^{1.5+\delta}} 
	     .
  \end{split}
	     \llabel{EQ175}
\end{align}
Next, we follow the steps leading to  \eqref{EQ100} so that
\begin{align}
	\begin{split}
	&\int \Lambda^{0.5+\delta}(J\partial_tV_i) \Lambda^{1.5+\delta}V
	\\&\indeq
	 =
	  \frac12 \frac{d}{dt}
	   \int J \Lambda^{0.5+\delta}V \Lambda^{1.5+\delta}V
	    -\frac12 \int J_t \Lambda^{0.5+\delta}V \Lambda^{1.5+\delta}V
	    \\&\indeq\indeq
	    -\frac12 \int \left(\Lambda^{2}(J\Lambda^{0.5+\delta}V)-J\Lambda^{2.5+\delta}V\right)\Lambda^{-0.5+\delta}V_t
	    \\&\indeq\indeq
	    -\frac12 \int \left(J\Lambda^{2.5+\delta}V-\Lambda(J\Lambda^{1.5+\delta}V)\right)\Lambda^{-0.5+\delta}V_t
	    \\&\indeq
	    \lec
	     \frac12 \frac{d}{dt}
	     \int J \Lambda^{0.5+\delta}V \Lambda^{1.5+\delta}V
       \\&\indeq\indeq
	     +\int(\Vert V\Vert_{H^{1.5+\delta}}+\Vert V_t\Vert_{H^{-0.5+\delta}})\Vert V\Vert_{H^{1.5+\delta}}
	    .\label{EQ176}
	    \end{split}
\end{align}
We now combine \eqref{EQ170} with \eqref{EQ174}--\eqref{EQ176} and use \eqref{EQ140}
to obtain
\begin{align}
	\begin{split}
	\frac{1}{2}\frac{d}{dt}
	&\left(\Vert \Lambda^{1+\delta}W_t\Vert_{L^{2}(\Gamma_{1})}^2
	+\Vert \Delta_\hh\Lambda^{1+\delta}W\Vert_{L^{2}(\Gamma_{1})}^2
	\right)
	+\nu 
	\Vert \nabla_\hh\Lambda^{1+\delta}W_t\Vert_{L^{2}(\Gamma_{1})}^2
	\\&\lec
	\frac12 \frac{d}{dt}
	\int J \Lambda^{0.5+\delta}V \Lambda^{1.5+\delta}V
	+
        \int
        (
	\Vert V\Vert_{H^{1.5+\delta}}^2
	+\Vert W\Vert_{H^{3+\delta}(\Gamma_1)}^2
	+\Vert W_t\Vert_{H^{1+\delta}(\Gamma_1)}^2
	)
	,
	\llabel{EQ177}
	\end{split}
\end{align}
from where integrating in time yields~\eqref{EQ161}.

Now, we claim that $\Vert \zeta-\tilde{\zeta}\Vert_{H^{0.5+\delta}} \lec \int_{0}^{t}P$.
To establish this,
as in Section~\ref{sec.vor}, we extend $\zeta$ and $\tilde{\zeta}$
to $\theta$ and $\tilde{\theta}$ that are defined on $\mathbb{T}^2\times \mathbb{R}$.
For functions other than $\zeta$ and $\tilde{\zeta}$,
we do not introduce new notations for extensions, e.g.,
$V$ stands for both $v-\tilde{v}$ and its extension to $\mathbb{T}^2\times \mathbb{R}$.
It follows that $\Theta=\theta-\tilde{\theta}$ solves
\begin{align}
	\partial_t \Theta_i = Z_\Theta + Z_V + Z_A + Z_{\Psi_t} 
	,
	\label{EQ178}
\end{align}
where $Z_\Theta$ and $Z_V$ are given by
\begin{align}
	Z_\Theta
	 = a_{mk}\partial_m v_i \Theta_k
	   -(\tilde{v}_k\tilde{a}_{jk}-\tilde{\psi}_t\tilde{a}_{j3})
	    \partial_j \Theta_i
            \andand
	      Z_V
	      =
	      \tilde{\theta}_k \tilde{a}_{mk} \partial_m V_i
	      -a_{jk}\partial_j \theta_i V_k
     ,
	      \llabel{EQ179}
\end{align} 
and $Z_A$ and $Z_{\Psi_t}$ are equal to
\begin{align}
	Z_A
	 =\tilde{\theta}_k \partial_m v_i A_{mk},
	  -\tilde{v}_k \partial_j \theta_i A_{jk}
	   +\tilde{\psi}_t \partial_j \theta_i A_{j3}
         \andand
	    Z_{\Psi_t} = a_{j3}\partial_j \theta_i \Psi_t
	   .\llabel{EQ180}
\end{align}
Now, we apply 
$\Lambda_3^{0.5+\delta}$ to \eqref{EQ178} and test it with $\Lambda_3^{0.5+\delta}\Theta$
obtaining
\begin{align}
	\frac12 \frac{d}{dt}\Vert \Lambda_3^{0.5+\delta} \Theta\Vert_{L^{2}(\Omega_0)} 
	=\int_{\Omega_0} \Lambda_3^{0.5+\delta}(Z_\Theta + Z_V + Z_A + Z_{\Psi_t})\Lambda_3^{0.5+\delta} \Theta
	,\label{EQ181}
\end{align}
where $\Lambda_3 = (I-\Delta)^\frac12$.
We first note that
\begin{align}
  \begin{split}
     &
	\int_{\Omega_0} \Lambda_3^{0.5+\delta}(Z_V + Z_A + Z_{\Psi_t})\Lambda_3^{0.5+\delta} \Theta
    \\&\indeq
	\lec
	 (\Vert V\Vert_{H^{1.5+\delta}}
	  +\Vert W\Vert_{H^{3+\delta}(\Gamma_{1})}
	   +\Vert W_t\Vert_{H^{1+\delta}(\Gamma_{1})})\Vert \Theta\Vert_{H^{0.5+\delta}(\Omega_0)}
	    .
  \end{split}
    \llabel{EQ182}
\end{align}
Next, we write
\begin{align}
	\begin{split}
     &
	\int_{\Omega_0} \Lambda_3^{0.5+\delta}Z_\Theta \Lambda_3^{0.5+\delta} \Theta
       \\&\indeq
	=
	\int_{\Omega_0} \Lambda_3^{0.5+\delta} (a_{mk}\partial_m v_i \Theta_k)\Lambda_3^{0.5+\delta}\Theta_i
	-\frac12\int_{\Omega_0} 
	 (\tilde{v}_k\tilde{a}_{jk}-\tilde{\psi}_t\tilde{a}_{j3})
	  \partial_j |\Lambda_3^{0.5+\delta}\Theta|^2
	 \\&\indeq\indeq
	 -\int_{\Omega_0} 
	  \biggl(\Lambda_3^{0.5+\delta}
	   \left((\tilde{v}_k\tilde{a}_{jk}-\tilde{\psi}_t\tilde{a}_{j3})\partial_j \Theta_i\right) 
	    - (\tilde{v}_k\tilde{a}_{jk}-\tilde{\psi}_t\tilde{a}_{j3})\Lambda_3^{0.5+\delta}\partial_j \Theta_i\biggr) 
	     \Lambda_3^{0.5+\delta} \Theta_i
	  ,\label{EQ183}
	  \end{split} 
\end{align}
from where we integrate by parts and obtain
\begin{align}
	\int_{\Omega_0} \Lambda_3^{0.5+\delta}Z_\Theta \Lambda_3^{0.5+\delta} \Theta
	 \lec
	  \Vert V\Vert_{H^{1.5+\delta}}^2.
	  \label{EQ184}
\end{align}
Therefore, collecting \eqref{EQ181}--\eqref{EQ184}
gives
\begin{align}
	\Vert \zeta-\tilde{\zeta}\Vert_{H^{0.5+\delta}}
	\lec
	\Vert \Theta\Vert_{H^{0.5+\delta}(\Omega_0)}
	 \lec \int_{0}^{t}P
	 .\llabel{EQ185}
\end{align}

We are now in a position to collect our estimates
and conclude the proof of uniqueness.
As in \eqref{EQ130}, we have
\begin{align}
	\Vert V\Vert_{H^{1.5+\delta}}
	 \lec 
	  \Vert \div V\Vert_{H^{0.5+\delta}}
	   +\Vert \curl V\Vert_{H^{0.5+\delta}}
	    +\Vert V\cdot N\Vert_{H^{1+\delta}(\Gamma_0 \cup \Gamma_1)}
	     +\Vert V\Vert_{L^{2}}
	     .\label{EQ186}
\end{align}
Using \eqref{EQ168}$_1$, we obtain
\begin{align}
	\Vert \div V\Vert_{H^{0.5+\delta}}^2
	 \lec 
	  \Vert (b-I)\nabla V\Vert_{H^{0.5+\delta}}^2
	   +\Vert B\nabla \tilde{v}\Vert_{H^{0.5+\delta}}^2
	    \lec
	     \epsilon \Vert V\Vert_{H^{1.5+\delta}}^2
	     +\Vert W\Vert_{H^{3+\delta}(\Gamma_1)}.\llabel{EQ187}
\end{align}
Next, for the boundary term, we note that $V\cdot N =0$
on the bottom boundary so that \eqref{EQ168}$_2$ implies
\begin{align}
	\begin{split}
	\Vert V\cdot N\Vert_{H^{1+\delta}(\Gamma_{1})}^2
	 &\lec
	  \Vert (I-b)V\Vert_{H^{1+\delta}(\Gamma_{1})}^2
	   +\Vert W_t\Vert_{H^{1+\delta}(\Gamma_{1})}^2
	    +\Vert B\tilde{v}\Vert_{H^{1+\delta}(\Gamma_{1})}^2
	     \\&\lec
	      \epsilon \Vert V\Vert_{H^{1.5+\delta}}^2
	       +\Vert W\Vert_{H^{3+\delta}(\Gamma_1)}
	       +\Vert W_t\Vert_{H^{1+\delta}(\Gamma_{1})}^2
	       .\llabel{EQ188}
	       \end{split}
\end{align}
Finally, for the vorticity term, we have
\begin{align}
	\begin{split}
	\Vert \curl V\Vert_{H^{0.5+\delta}}^2
	 &\lec
	  \Vert \zeta - \tilde{\zeta}\Vert_{H^{0.5+\delta}}^2
	   +\Vert (a-I)\nabla V\Vert_{H^{0.5+\delta}}^2
	    +\Vert \nabla \tilde{v}A\Vert_{H^{0.5+\delta}}^2
	    \\&\lec
	     \epsilon \Vert V\Vert_{H^{1.5+\delta}}
	     +\Vert W_t\Vert_{H^{1+\delta}(\Gamma_{1})}^2
	      +\int_0^t P
	      .\label{EQ189}
	      \end{split}
\end{align}
Now, combining \eqref{EQ186}--\eqref{EQ189}
and absorbing the factors of $\Vert V\Vert_{H^{1.5+\delta}}$,
 it follows that
\begin{align}
	\Vert V\Vert_{H^{1.5+\delta}}^2
	 \lec
	 \Vert V\Vert_{L^{2}}^2	+
	 \Vert W\Vert_{H^{3+\delta}(\Gamma_1)}^2
	 +\Vert W_t\Vert_{H^{1+\delta}(\Gamma_{1})}^2
	 +\int_0^t P
	 .\label{EQ190}  	
\end{align}
We now multiply \eqref{EQ190}
with $\epsilon_0$, add the resulting inequality to~\eqref{EQ161}
and absorb the factors of $\Vert W\Vert_{H^{3+\delta}(\Gamma_{1})}$
and $\Vert W_t\Vert_{H^{1+\delta}(\Gamma_1)}$, which yields
\begin{align}
	P \lec 
	 \Vert V\Vert_{L^{2}}^2+
	 \Vert V\Vert_{L^{2}}^\frac{1}{1.5+\delta}\Vert V\Vert_{H^{1.5+\delta}}^\frac{2+2\delta}{1.5+\delta}
	 +\int_{0}^{t} P
	 .\llabel{EQ191}
\end{align}
Using Young's inequality, we may absorb the factor of $\Vert V\Vert_{H^{1.5+\delta}}$
leaving us with
\begin{align}
	P \lec 
	\Vert V\Vert_{L^{2}}^2
	+\int_{0}^{t} P
	.\label{EQ192}
\end{align}
Finally, dividing \eqref{EQ164} by $J$, testing it with $V$ and
using \eqref{EQ168} implies that
\begin{align}
	\Vert V\Vert_{L^{2}}^2
	 \lec
	  \int_0^t P
	  .\llabel{EQ193}
\end{align}
Together with \eqref{EQ192}, we obtain~\eqref{EQ138}.
Recalling that $P(0)=0$, we conclude the proof
of the uniqueness part of Theorem~\ref{T01}.

\section*{Acknowledgments}
MSA and IK was supported in part by the NSF grant DMS-2205493.

	\small

	\ifnum\sketches=1
	\newpage
	\begin{center}
		\bf   Notes ?\rm
	\end{center}
	\huge
	\begin{align}
		& w \in H^{\rr+1.5}(\Gamma_1)   
		\\&
		w_{t} \in H^{\rr-0.5}(\Gamma_1)   
		\\&
		w_{tt} \in H^{\rr-2.5}(\Gamma_1)   
		\\&
		v\in H^{\rr}
		\\&
		v_t\in H^{\rr-2}
		\\&
		q\in H^{\rr-1} 
		\\&
		\eta,\psi\in H^{\rr+2} \text{\ \ \ \ by~$w$}
		\\&
		\eta_t,\psi_t\in H^{\rr}\text{\ \ \ \ by~$w_{t}$}
		\\&
		a,\tda,J\in H^{\rr+1}\text{\ \ \ \ by~$w$}
		\\&
		a_t \in H^{\rr-1}\text{\ \ \ \ by~$w$ and $w_{t}$}
		\\&
		\tda_t,J_t\in H^{\rr-1}\text{\ \ \ \ by~$w_{t}$}
		\\&
		\zeta\in H^{\rr-1}
	\end{align}
	
	\newpage
	\huge
	\begin{align}
		& W \in H^{\rr+0.5}(\Gamma_1)   
		\\&
		W_{t} \in H^{\rr-1.5}(\Gamma_1)   
		\\&
		W_{tt} \in H^{-\rr-1.5}(\Gamma_1)   
		\\&
		V\in H^{\rr-1}
		\\&
		V_t\in H^{\rr-3}
		\\&
		Q\in H^{\rr-2} 
		\\&
		E,\Psi\in H^{\rr+1} \text{\ \ \ \ by~$W$}
		\\&
		E_t, \Psi_t\in H^{\rr-1}\text{\ \ \ \ by~$W_{t}$}
		\\&
		A,B\in H^{\rr}\text{\ \ \ \ by~$W$}
		\\&
		A_t \in H^{\rr-2}\text{\ \ \ \ by~$W$ and $W_{t}$}
		\\&
		B_t\in H^{\rr-2}\text{\ \ \ \ by~$W_{t}$}
		\\&
		\Theta\in H^{\rr-2}
		\\&
		\Theta_t\in H^{\rr-3}
		\\&
		a b - \tilde a \tilde b
		= A b + \tilde a B
		\\&
		a b - \tilde a \tilde b
		= a B + A \tilde b
		\\&
		a b c - \tilde a \tilde b \tilde c
		= A b c + \tilde a B c + \tilde a \tilde b C
	\end{align}
	
	\fi
	
\end{document}